\newtheorem{thm}{Theorem}
\newtheorem{lem}{Lemma}
\newtheorem{exercise}{Exercise}
\newcommand{\expect}[1]{\mathbb{E}\left[#1\right]}
\newcommand{\defequiv}{\mbox{\raisebox{-.3ex}{$\overset{\vartriangle}{=}$}}}
\newcommand{\norm}[1]{||{#1}||}
\newcommand{\bv}[1]{{\boldsymbol{#1} }}
\newcommand{\script}[1]{{{\cal{#1} }}}
\begin{document}

\title
  {Low Power Dynamic Scheduling for Computing Systems}
\author{Michael J. Neely%
\thanks{The author is with the  Electrical Engineering department at the University
of Southern California, Los Angeles, CA.} 
\thanks{This material is supported in part  by one or more of 
the following:  the NSF Career grant CCF-0747525, the 
Network Science Collaborative Technology Alliance sponsored
by the U.S. Army Research Laboratory W911NF-09-2-0053.}}

\markboth{}{Neely}

\maketitle

\begin{abstract}   
This paper considers energy-aware control for a computing system with 
two states: \emph{active} and \emph{idle}.  In the active state, the controller chooses to perform
a single task using one of multiple task processing modes.  The controller then
saves energy by  
choosing an amount of time for the system to be idle. These decisions affect 
processing time, energy expenditure, and an abstract
\emph{attribute vector} that can be used to model other criteria of 
interest (such as processing quality or distortion). 
The goal is to optimize time average system performance.   
Applications of this model include a smart phone that makes energy-efficient computation 
and transmission decisions, a computer that processes tasks subject to rate, quality, and power
constraints, and a smart grid energy manager that allocates resources in reaction 
to a time varying energy price.  
The solution methodology of this paper uses the theory of \emph{optimization for renewal systems} developed
in our previous work.  This paper is written in tutorial form and develops the main 
concepts of the theory using several detailed examples. It also highlights the relationship between 
online dynamic optimization and linear fractional programming.  Finally, it 
provides exercises to help the reader learn the 
main concepts and apply them to their own optimizations.   
This paper is an arxiv technical report, and 
is a preliminary version of material that will appear as a book chapter in an upcoming
book on green communications and networking.  
\end{abstract} 

\begin{keywords} 
Queueing analysis, optimization, stochastic control, renewal theory
\end{keywords}

\section{Introduction} 

This paper considers energy-aware control for a computing system with 
two states: \emph{active} and \emph{idle}.  In the active state, the controller chooses to perform
a single task using one of multiple task processing modes.  The controller then
saves energy by  
choosing an amount of time for the system to be idle. These decisions affect 
processing time, energy expenditure, and an abstract
\emph{attribute vector} that can be used to model other criteria of 
interest (such as processing quality or distortion). 
The goal is to optimize time average system performance.   
Applications of this model include a smart phone that makes energy-efficient computation 
and transmission decisions, a computer that processes tasks subject to rate, quality, and power
constraints, and a smart grid energy manager that allocates resources in reaction 
to a time varying energy price.  

The solution methodology of this paper uses the theory of \emph{optimization for renewal systems} developed
in \cite{sno-text}\cite{renewals-asilomar2010}. Section \ref{section:task-scheduling}  
focuses on a computer system that seeks to minimize average power subject 
to processing rate constraints for different classes of tasks.  Section 
\ref{section:general-attributes} generalizes to treat optimization for a larger class of systems.
Section \ref{section:random-events} extends the model to allow control actions
to react to a random event observed at the beginning of each active period, such as a vector
of current channel conditions or energy prices. 

\section{Task Scheduling with Processing Rate Constraints} \label{section:task-scheduling} 

\begin{figure}[htbp]
   \centering
   \includegraphics[height=1.5in, width=6.2in]{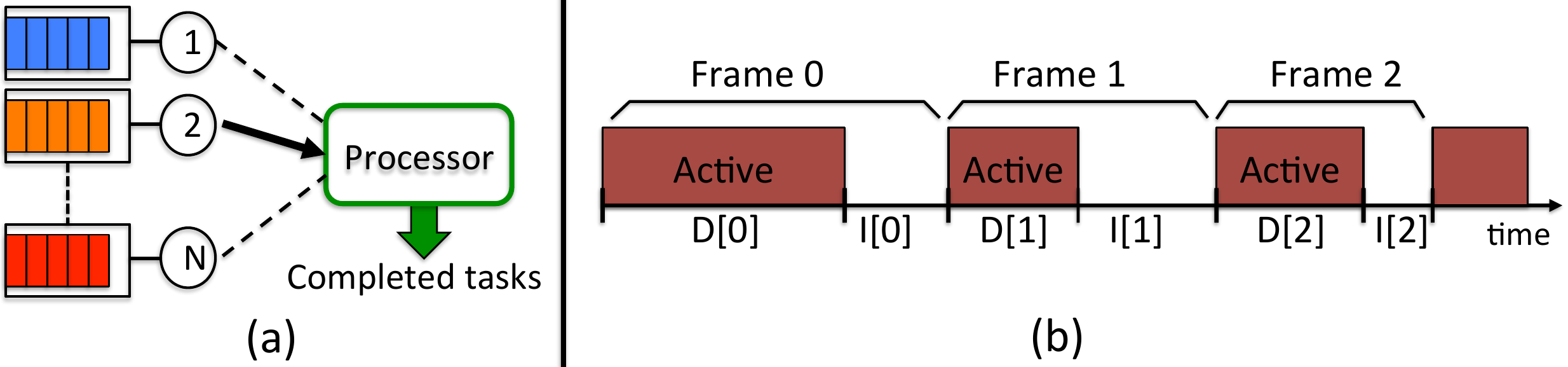} % requires the graphicx package
   \caption{(a) A processor that chooses from one of $N$ classes on each frame $k$. (b) A
   timeline illustrating the active and idle periods for each frame.}
   \label{fig:timeline}
\end{figure}

To illustrate the method, this section considers a particular system.  
%Variations on this system
%can easily be treated using the general methodology described in later sections. 
Consider 
a computer system that repeatedly processes tasks.  There are $N$ classes of tasks, 
where $N$ is a positive integer.   For simplicity, assume each class always has a new task ready
to be performed (this is extended to randomly arriving tasks in 
Section \ref{section:random-task-arrivals}). 
The system operates over time intervals called \emph{frames}.  Each frame $k \in \{0, 1, 2, \ldots\}$ 
begins with an \emph{active period} of size $D[k]$ 
and ends with an \emph{idle period} of size $I[k]$ (see Fig. \ref{fig:timeline}). 
At the beginning of each active period $k$, the controller selects a new task of class $c[k] \in \{1, \ldots, N\}$. 
It also chooses a \emph{processing mode} $m[k]$ from a finite set $\script{M}$ of possible processing
options. These control decisions affect the duration $D[k]$ of the active period and the 
energy $e[k]$ that is incurred. The controller then selects an amount of time $I[k]$ to be idle, where
$I[k]$ is chosen such that $0 \leq I[k] \leq I_{max}$ for some positive number $I_{max}$. 
Choosing $I[k]=0$ effectively skips the idle period, so there can be back-to-back active periods. 
For simplicity, this section assumes that no energy is expended in the idle state. 

Assume that $D[k]$ and $e[k]$
are \emph{random functions} of the class and mode decisions for frame $k$.  
Specifically, assume $D[k]$ and $e[k]$ are conditionally
independent of the past, given the current $(c[k], m[k])$ that is used, with mean values given by 
functions $\hat{D}(c, m)$ and $\hat{e}(c,m)$ defined over $(c,m) \in \{1, \ldots, N\} \times \script{M}$: 
\begin{eqnarray*}
\hat{D}(c,m) \defequiv \expect{D[k]|(c[k], m[k]) = (c,m)} \: \: \: , \: \: \: 
\hat{e}(c,m) \defequiv \expect{e[k]|(c[k], m[k]) = (c,m)} 
\end{eqnarray*}
where the notation ``$a \defequiv b$'' represents ``$a$ is defined to be equal to $b$.''  This paper 
uses $\hat{D}(c[k], m[k])$ and $\hat{e}(c[k], m[k])$ to denote expectations given a 
particular decision $(c[k], m[k])$ for frame $k$: 
\[ \hat{D}(c[k], m[k]) = \expect{D[k]|c[k], m[k]} \: \: \: , \: \: \: \hat{e}(c[k],m[k]) = \expect{e[k]|c[k], m[k]} \]
It is assumed there is a positive value $D_{min}$ such that $D[k] \geq D_{min}$ for all frames $k$, regardless
of the $(c[k],m[k])$ decisions. Thus, 
all frame sizes are at least $D_{min}$ units of time. Further, for technical reasons, 
it is assumed that second moments of $D[k]$ and $e[k]$ are bounded by a finite constant
$\sigma^2$, so that:  
\begin{equation} \label{eq:smb} 
\expect{D[k]^2} \leq \sigma^2 \: \: \: , \: \: \: \expect{e[k]^2} \leq \sigma^2 
\end{equation} 
where \eqref{eq:smb} holds regardless of the policy for selecting $(c[k], m[k])$.
The conditional 
joint distribution of $(D[k], e[k])$, given $(c[k], m[k])$,  is otherwise arbitrary, and 
only the mean values $\hat{D}(c,m)$ and $\hat{e}(c,m)$ are known for each 
$c \in \{1, \ldots, N\}$ and $m \in \script{M}$. 
In the special case of a deterministic system, the functions
$\hat{D}(c,m)$ and $\hat{e}(c,m)$ can be viewed as deterministic mappings from a given control 
action $(c[k],m[k])=(c,m)$ to the actual delay  $D[k]=\hat{D}(c,m)$ and
energy  $e[k]=\hat{e}(c,m)$ experienced on frame $k$, rather than as expectations
of these values. 

\subsection{Examples of Energy-Aware Processing} 

%Consider an example where an embedded microchip performs
%computation for different tasks. 
%Suppose the system uses a multi-processor that 
%can select between one of multiple processing modes for each task.  
Consider an example where a computer system performs computation for different 
tasks. Suppose the system uses a chip multi-processor that can select between one of multiple
processing modes for each task.
For example, this might be done 
using voltage/frequency scaling, or by using a choice of different 
processing cores \cite{low-power-chips}\cite{low-power-chips-murali}. 
Let $\script{M}$ represent the set of processing modes.  For each mode $m \in \script{M}$, 
define: 
\begin{eqnarray*} 
T_{setup}(m) &\defequiv& \mbox{ Setup time for mode $m$.} \\
e_{setup}(m) &\defequiv& \mbox{ Setup energy for mode $m$.} \\
DPI(m) &\defequiv& \mbox{ Average delay-per-instruction for mode $m$.} \\
EPI(m) &\defequiv& \mbox{ Average energy-per-instruction for mode $m$.} 
\end{eqnarray*}
Further suppose that tasks of class $c \in \{1, \ldots, N\}$ have an average number of instructions
equal to $\overline{S}_c$.  For simplicity, suppose the number of instructions in a task is 
independent of the energy and delay of each individual instruction.  
Then the average energy and delay functions $\hat{e}(c,m)$ and $\hat{D}(c,m)$
are: 
\begin{eqnarray*}
\hat{e}(c,m) &=& e_{setup}(m) + \overline{S}_cEPI(m) \\
\hat{D}(c,m) &=& T_{setup}(m) + \overline{S}_cDPI(m)
\end{eqnarray*}
In cases when the system cannot be modeled using $DPI(m)$ and 
$EPI(m)$ functions, the expectations $\hat{e}(c,m)$ and $\hat{D}(c,m)$ can be estimated as
empirical averages of energy and delay observed when processing type $c$ tasks with mode $m$.
While this example assumes all classes have the same set of processing mode 
options $\script{M}$, this can easily be extended to restrict each class $c$ to its own subset
of options $\script{M}_c$.
% (see also the general formulation in Section \ref{section:general-attributes}). 
% This is useful, for example, when some types of 
%tasks can be parallelized and processed simultaneously on different cores of the chip, 
%while others cannot.  

As another example, consider the problem of \emph{wireless
data transmission}.  Here, each task represents a packet of data that must be transmitted.  
Let $\script{M}$ represent the set of wireless transmission options (such as
modulation and coding strategies).  For each $m \in \script{M}$, define $\mu(m)$ as the transmission 
rate (in bits per unit time) under option $m$, and let $power(m)$ be the power used.  For simplicity, 
assume there are no transmission errors. 
Let $\overline{B}_c$ represent the average
packet size for class $c \in \{1, \ldots, N\}$, in units of bits.  Thus:
\begin{eqnarray*}
\hat{e}(c,m) &=& power(m)\overline{B}_c/\mu(m) \\
\hat{D}(c,m) &=& \overline{B}_c/\mu(m) 
\end{eqnarray*}
%In the case of a fading channel with a known probability distribution but where
%fading states cannot be measured at the transmitter, the value $\mu(m)$
%can be viewed as the \emph{average} transmission rate under option $m$, 
%which includes retransmissions due to errors. 
In the case when each transmission mode $m \in \script{M}$ has a known error probability, the functions
$\hat{e}(c,m)$ and $\hat{D}(c,m)$ can be redefined to account for retransmissions.
An important alternative scenario is when channel states are time-varying but can be
measured at the beginning of each frame. This can be treated using the extended theory 
in Section \ref{section:random-events}.

\subsection{Time Averages as Ratios of Frame Averages} \label{section:time-average-as-frame-average} 

The goal is to design a control policy that makes decisions over frames to minimize time average power subject to 
processing each class $n \in \{1, \ldots, N\}$ with rate at least $\lambda_n$, for some desired
processing rates $(\lambda_1, \ldots, \lambda_N)$ that are given.  
Before formalizing this as a mathematical 
optimization, this subsection shows how to write time averages in terms of frame averages. 
Suppose there is a control policy that yields a sequence of energies $\{e[0], e[1], e[2], \ldots\}$
and corresponding frame sizes $\{D[0]+ I[0], D[1] + I[1], D[2] + I[2], \ldots\}$ for each frame $k \in \{0, 1, 2, \ldots\}$. 
The \emph{frame averages} 
$\overline{e}$, $\overline{D}$, $\overline{I}$ are defined: 
\begin{equation} \label{eq:frame-averages} 
 \overline{e} \defequiv \lim_{K\rightarrow\infty}\frac{1}{K}\sum_{k=0}^{K-1} e[k]  \: \: \: , \: \: \: \overline{D} \defequiv \lim_{K\rightarrow\infty}\frac{1}{K}\sum_{k=0}^{K-1} D[k] \: \: \: , \: \: \: \overline{I} \defequiv \lim_{K\rightarrow\infty} \frac{1}{K}\sum_{k=0}^{K-1}I[k]
 \end{equation} 
where, for simplicity, it is assumed  the limits converge to constants with probability 1.
Note that $\overline{e}$ does \emph{not} represent the time average power used by the system, because it
does not consider the amount of time spent in each frame.  The time average power considers the accumulated
energy used divided by the total time, and is written as follows: 
\begin{eqnarray*}
\lim_{K\rightarrow\infty} \frac{\sum_{k=0}^{K-1}e[k]}{\sum_{k=0}^{K-1}(D[k] + I[k])} = \lim_{K\rightarrow\infty}\frac{\frac{1}{K}\sum_{k=0}^{K-1}e[k]}{\frac{1}{K}\sum_{k=0}^{K-1}(D[k]+I[k])} = \frac{\overline{e}}{\overline{D} + \overline{I}} 
\end{eqnarray*}
Therefore, the time average power is equal to the average energy per frame divided by the average frame size. 
This simple observation is often used in \emph{renewal-reward theory} \cite{gallager}\cite{ross-prob}.

For each class $n \in \{1, \ldots, N\}$ and each frame $k$, 
 define an \emph{indicator variable} $1_n[k]$ that is $1$ if the controller chooses to process a class $n$ task on 
 frame $k$, and $0$ else: 
 \[ 1_n[k] \defequiv  \left\{ \begin{array}{ll}
                          1 &\mbox{ if $c[k] = n$} \\
                             0  & \mbox{if $c[k] \neq n$} 
                            \end{array}
                                 \right.\]
 Then $\overline{1}_n \defequiv \lim_{K\rightarrow\infty}\frac{1}{K}\sum_{k=0}^{K-1}1_n[k]$ is the fraction of frames that choose class $n$, and the ratio 
  $\overline{1}_n/(\overline{D} + \overline{I})$ 
 is the time average rate of processing class $n$ tasks, in tasks per unit time. 
 
 The problem of minimizing time average power subject to processing each class $n$ at a rate of 
 at least $\lambda_n$ tasks per unit time  is then mathematically written as follows:
 \begin{eqnarray}
 \mbox{Minimize:} && \frac{\overline{e}}{\overline{D} + \overline{I}} \label{eq:energy} \\
 \mbox{Subject to:} && \frac{\overline{1}_n}{\overline{D} + \overline{I}} \geq \lambda_n \: \: \: \: \forall n \in \{1, \ldots, N\} \label{eq:rate-constraint} \\
 && (c[k], m[k]) \in \{1, \ldots, N\} \times \script{M}   \: \: \: \: \forall k \in \{0, 1, 2, \ldots\} \label{eq:cm-constraint} \\
 && 0 \leq I[k] \leq I_{max}  \: \: \: \: \forall k \in \{0, 1, 2, \ldots\} \label{eq:I-constraint} 
 \end{eqnarray}
 where the objective \eqref{eq:energy} is average power, the constraint \eqref{eq:rate-constraint} ensures the 
 processing rate of each  class $n$ is at least $\lambda_n$, and 
 constraints \eqref{eq:cm-constraint}-\eqref{eq:I-constraint} ensure that $c[k] \in \{1, \ldots, N\}$, 
 $m[k] \in \script{M}$, and $0 \leq I[k] \leq I_{max}$ for each frame $k$. 
 
\subsection{Relation to Frame Average Expectations} 

The problem \eqref{eq:energy}-\eqref{eq:I-constraint} is defined by frame averages.  This subsection 
shows that frame averages are related to frame average \emph{expectations}, and hence can be related
to the expectation functions $\hat{D}(c,m)$, $\hat{e}(c,m)$.  Consider any (possibly randomized)
control algorithm for selecting $(c[k], m[k])$ over frames, and assume this gives rise to well defined
expectations $\expect{e[k]}$ for each frame $k$. By the law of iterated expectations, it follows that 
for any given frame $k \in \{0, 1,2 , \ldots\}$: 
\begin{equation} \label{eq:iterated-expectations} 
\expect{e[k]} = \expect{\: \expect{e[k]|c[k], m[k]}\: } = \expect{\: \hat{e}(c[k], m[k]) \: } 
\end{equation} 
Furthermore, because second moments are bounded by a  constant $\sigma^2$ on each frame $k$, 
the \emph{bounded moment convergence theorem} (given in Appendix A) 
ensures that if the frame average energy converges
to a constant $\overline{e}$ with probability 1, as defined in \eqref{eq:frame-averages}, then $\overline{e}$ is the 
same as the frame average expectation: 
\[ \lim_{K\rightarrow\infty}\frac{1}{K}\sum_{k=0}^{K-1} e[k] = \overline{e} = \lim_{K\rightarrow\infty} \frac{1}{K}\sum_{k=0}^{K-1}\expect{\hat{e}(c[k],m[k])} \] 
The same goes for the quantities $\overline{D}$, $\overline{I}$, $\overline{1}_n$.  Hence, one can interpret
the problem  \eqref{eq:energy}-\eqref{eq:I-constraint} using frame average expectations, rather than pure
frame averages.  

\subsection{An Example with One Task Class} \label{section:one-class-example} 

Consider a simple example with only one type of task.  The system processes a new task of this
type at the beginning of every busy period.  The energy and delay functions can then
be written purely in terms of the 
processing mode $m \in \script{M}$, so that we have $\hat{e}(m)$ and $\hat{D}(m)$. 
Suppose there are only two processing mode options, so that $\script{M} = \{1, 2\}$, and that
each option leads to a  deterministic energy and delay, as given below: 
\begin{eqnarray}
m[k] = 1 &\implies& (e[k], D[k]) = (\hat{e}(1), \hat{D}(1)) =  (1, 7) \label{eq:m-value1} \\
m[k] = 2 &\implies& (e[k], D[k]) = (\hat{e}(2), \hat{D}(2)) =  (3, 4) \label{eq:m-value2} 
\end{eqnarray}
Option $m[k]=1$ requires 1 unit of energy but 7 units of processing time. Option $m[k]=2$ is more energy-expensive (requiring $3$ units of energy) but is faster (taking only 4 time units).   The idle time $I[k]$ is chosen every frame
in the interval $[0, 10]$, so that $I_{max} = 10$. 

\subsubsection{No constraints} 

For this system, suppose we seek to minimize average 
power $\overline{e}/(\overline{D} + \overline{I})$, with no processing rate constraint.  
Consider three possible algorithms:  
\begin{enumerate} 
\item Always use $m[k]=1$ and $I[k] = I$ for all frames $k$, for some constant 
$I \in [0, 10]$. 
\item Always use $m[k] = 2$ and $I[k] =I$ for all frames $k$, for some constant $I \in [0, 10]$. 
\item Always use $I[k] = I$ for all frames $k$, for some constant $I \in [0, 10]$. However, each frame $k$, independently
choose $m[k]=1$ with probability $p$, and $m[k]=2$ with probability $1-p$ (for some $p$ that satisfies
$0 \leq p \leq 1$). 
\end{enumerate} 

Clearly the third algorithm contains the first two for $p=1$ and $p=0$, respectively.  The time average power
under each algorithm is: 
\begin{eqnarray*}
\mbox{$m[k] = 1$ always} &\implies& \frac{\overline{e}}{\overline{D} + \overline{I}} = \frac{\hat{e}(1)}{\hat{D}(1) + I} = \frac{1}{7 + I} \\
\mbox{$m[k]=2$ always} &\implies& \frac{\overline{e}}{\overline{D} + \overline{I}} = \frac{\hat{e}(2)}{\hat{D}(2) + I} =\frac{3}{4+I} \\
\mbox{probabilistic rule} &\implies& \frac{\overline{e}}{\overline{D} + \overline{I}} = \frac{p\hat{e}(1) + (1-p)\hat{e}(2)}{p\hat{D}(1) + (1-p)\hat{D}(2) + I} = \frac{1p + 3(1-p)}{7p + 4(1-p) + I}
\end{eqnarray*}

It is clear that in all three cases, we should choose $I = 10$ to minimize average power. 
Further, it is clear that always choosing $m[k]=1$ is better than always choosing $m[k]=2$.  
However, it is not immediately obvious if a randomized mode selection rule can do even better.  The 
answer is no:  In this case, power is minimized by choosing $m[k] = 1$ and $I[k] = 10$ for all frames $k$, 
yielding average power $1/17$. 

This fact holds more generally: Let
$a(\alpha)$ and $b(\alpha)$ be any deterministic real valued 
functions defined over general \emph{actions} $\alpha$ 
that are chosen in an abstract \emph{action space} $\script{A}$.  Assume the functions are bounded, and that 
there is a value $b_{min}>0$
such that $b(\alpha) \geq b_{min}$ for all $\alpha \in \script{A}$.  Consider designing
a randomized choice of 
$\alpha$ that minimizes $\expect{a(\alpha)}/\expect{b(\alpha)}$, where the expectations are with respect
to the randomness in the $\alpha$ selection.  The next lemma shows this is done 
by  \emph{deterministically} choosing an action $\alpha \in \script{A}$ that 
minimizes $a(\alpha)/b(\alpha)$. 

\begin{lem} \label{lem:deterministic-min} 
Under the assumptions of the preceding paragraph, for any randomized choice of $\alpha\in\script{A}$ 
we have: 
\[ \frac{\expect{a(\alpha)}}{\expect{b(\alpha)}} \geq \inf_{\alpha\in\script{A}} \left[\frac{a(\alpha)}{b(\alpha)}\right] \]
and so the infimum of the ratio over the class of deterministic decisions yields a value that is less than or equal
to that of any randomized selection.
\end{lem}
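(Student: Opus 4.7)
The plan is to reduce the randomized--vs.--deterministic comparison to a pointwise inequality. Let $r^{\star}\defequiv \inf_{\alpha\in\script{A}} a(\alpha)/b(\alpha)$. Since $a$ is bounded and $b(\alpha)\geq b_{min}>0$, the ratio $a(\alpha)/b(\alpha)$ is bounded, so $r^{\star}$ is a finite real number. The goal is then to show that every randomized rule achieves a ratio of expectations that is at least $r^{\star}$.

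First I would observe the pointwise inequality: by definition of the infimum, for every $\alpha\in\script{A}$,
\[ \frac{a(\alpha)}{b(\alpha)} \geq r^{\star}. \]
Because $b(\alpha)>0$, I can multiply through to obtain $a(\alpha) \geq r^{\star} b(\alpha)$ for every $\alpha$. This is the crucial step — it converts a ratio inequality into a linear inequality that behaves well under expectation.

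Next I would take expectations over the randomized choice of $\alpha$. By linearity, $\expect{a(\alpha)} \geq r^{\star}\expect{b(\alpha)}$. Since $b(\alpha)\geq b_{min}>0$ with probability one, $\expect{b(\alpha)}\geq b_{min}>0$, so dividing both sides by $\expect{b(\alpha)}$ preserves the inequality and yields
\[ \frac{\expect{a(\alpha)}}{\expect{b(\alpha)}} \geq r^{\star} = \inf_{\alpha\in\script{A}}\frac{a(\alpha)}{b(\alpha)}, \]
which is exactly the claim. The second sentence of the lemma then follows immediately: a deterministic rule that approaches the infimum of $a(\alpha)/b(\alpha)$ is a special case of a randomized rule (a degenerate distribution), and its ratio of expectations is simply $a(\alpha)/b(\alpha)$, so the infimum over deterministic rules is bounded above by that of any randomized rule.

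There is no real obstacle here; the proof is essentially a one-line algebraic manipulation. The only thing to verify carefully is that the hypotheses (boundedness of $a$ and $b$, and $b(\alpha)\geq b_{min}>0$) are doing all the work needed: they guarantee that $r^{\star}$ is finite, that the expectations $\expect{a(\alpha)}$ and $\expect{b(\alpha)}$ exist, and that $\expect{b(\alpha)}$ is strictly positive so that division is legitimate.
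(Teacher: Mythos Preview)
Your proof is correct and in fact cleaner than the paper's. The paper first reduces an arbitrary randomized rule to one supported on finitely many actions $\alpha_1,\ldots,\alpha_M$ (via a convex-hull/Carath\'eodory argument in a footnote), and then rewrites
\[
\frac{\sum_m p_m a(\alpha_m)}{\sum_m p_m b(\alpha_m)}
= \frac{\sum_m p_m b(\alpha_m)\,[a(\alpha_m)/b(\alpha_m)]}{\sum_m p_m b(\alpha_m)}
\]
as a weighted average of the pointwise ratios $a(\alpha_m)/b(\alpha_m)$ with nonnegative weights $p_m b(\alpha_m)$, from which the infimum bound is immediate. Your route bypasses the finite-support reduction entirely: you go straight from the pointwise inequality $a(\alpha)\geq r^\star b(\alpha)$ to $\expect{a(\alpha)}\geq r^\star \expect{b(\alpha)}$ by linearity, and then divide. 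This is more elementary and applies directly to any distribution on $\script{A}$ for which the expectations exist, without the detour through Carath\'eodory. What the paper's presentation buys is a slightly more explicit picture of the ratio of expectations as a genuine convex combination of pointwise ratios, but for the statement at hand your argument is both shorter and more general.
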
  

\begin{proof}  
Consider any randomized policy that yields expectations $\expect{a(\alpha)}$ and $\expect{b(\alpha)}$. 
Without loss of generality, assume these expectations are  
achieved by a policy that randomizes over a finite set of $M$  
actions $\alpha_1, \ldots, \alpha_M$ in $\script{A}$ with some probabilities $p_1, \ldots, p_M$:\footnote{Indeed, because the set $\script{S} = \{(a(\alpha), b(\alpha)) \mbox{ such that $\alpha \in \script{A}$\}}$ is bounded, the 
expectation $(\expect{a(\alpha)}, \expect{b(\alpha)})$ is finite and is contained in the convex hull of $\script{S}$. 
Thus, $(\expect{a(\alpha)}, \expect{b(\alpha)})$ is a convex combination of a finite number of points in $\script{S}$. %The set $\script{S}$ is 2 dimensional, and so 
%Caratheodory's theorem further 
%ensures we can find a convex combination that uses $M\leq 3$ points.
} 
\[ \expect{a(\alpha)} = \sum_{m=1}^Mp_ma(\alpha_m) \: \: , \: \: \expect{b(\alpha)} = \sum_{m=1}^Mp_mb(\alpha_m) \]
Then, because $b(\alpha_m) > 0$ for all $\alpha_m$, we have: 
\begin{eqnarray*}
\frac{\expect{a(\alpha)}}{\expect{b(\alpha)}} &=& \frac{\sum_{m=1}^Mp_ma(\alpha_m)}{\sum_{m=1}^Mp_mb(\alpha_m)} \\
&=& \frac{\sum_{m=1}^Mp_mb(\alpha_m)[a(\alpha_m)/b(\alpha_m)]}{\sum_{m=1}^Mp_mb(\alpha_m)} \\
&\geq& \frac{\sum_{m=1}^Mp_mb(\alpha_m)\inf_{\alpha\in\script{A}}[a(\alpha)/b(\alpha)]}{\sum_{m=1}^Mp_mb(\alpha_m)} \\
&=&   \inf_{\alpha\in\script{A}} [a(\alpha)/b(\alpha)]
\end{eqnarray*}
\end{proof}  

%Lemma \ref{lem:deterministic-min} is important because we soon develop a technique that
%solves constrained problems of the type \eqref{eq:energy}-\eqref{eq:I-constraint} by making a sequence
%of unconstrained decisions every frame. The unconstrained decisions involve minimizing a ratio 
%of the type considered in the lemma. 

\subsubsection{One Constraint}  

The preceding subsection shows that unconstrained problems can be solved by 
deterministic actions. 
This is not true for constrained problems. This subsection shows that adding just a single constraint 
often \emph{necessitates} the use of 
randomized actions.   Consider the same problem as before, with two 
choices for $m[k]$, and with the 
same $\hat{e}(m)$ and $\hat{D}(m)$ values given in \eqref{eq:m-value1}-\eqref{eq:m-value2}. 
We want to minimize $\overline{e}/(\overline{D} + \overline{I})$ subject to 
$1/(\overline{D} + \overline{I}) \geq 1/5$, where $1/(\overline{D} + \overline{I})$ is the rate of 
processing jobs.  The constraint is equivalent to $\overline{D} + \overline{I} \leq 5$. 

Assume the algorithm chooses $I[k]$ over frames to yield an average $\overline{I}$ that is somewhere
in the interval $0 \leq \overline{I} \leq 10$.  Now consider different algorithms for selecting $m[k]$.  
If we choose $m[k]=1$ always (so that $(\hat{e}(1), \hat{D}(1)) = (1, 7)$), then: 
\begin{eqnarray*}
\mbox{$m[k]=1$ always} &\implies& \frac{\overline{e}}{\overline{D}  + \overline{I}} = \frac{1}{7 + \overline{I}} \: \: \: \:  , \: \: \: \: 
 \overline{D} + \overline{I} = 7 + \overline{I}
\end{eqnarray*}
and so it is impossible to meet the 
constraint $\overline{D} + \overline{I} \leq 5$, because $7 + \overline{I} > 5$.  

If we choose  $m[k]=2$ always (so that $(\hat{e}(2), \hat{D}(2)) = (3, 4)$), then:  
\begin{eqnarray*}
\mbox{$m[k]=2$ always} &\implies& \frac{\overline{e}}{\overline{D}  + \overline{I}} = \frac{3}{4 + \overline{I}} \: \: \: \: , \: \: \: \: 
 \overline{D} + \overline{I} = 4 + \overline{I}
\end{eqnarray*}
It is clear that we can meet the constraint by choosing $\overline{I}$ so that $0 \leq \overline{I} \leq 1$, 
and power is minimized in this setting 
by using $\overline{I} = 1$.  This can be achieved, for example, by using $I[k] =1$
for all frames $k$.  This meets the processing rate constraint with equality:  $\overline{D} + \overline{I} =  4 + 1 = 5$. 
Further, it yields average power $\overline{e}/(\overline{D} + \overline{I}) = 3/5 = 0.6$. 

However, it is possible to reduce average power while also meeting the constraint with equality by
using the following randomized policy (which can be shown to be optimal):  Choose $I[k]=0$ for all frames 
$k$, so that $\overline{I} = 0$.  Then every frame $k$, independently choose $m[k]=1$ with probability 1/3, 
and $m[k]=2$ with probability 2/3.  We then have: 
\[ \overline{D} + \overline{I} = (1/3)7 + (2/3)4 + 0 = 5 \]
and so the processing rate constraint is met with equality.  However, average power is: 
\[ \frac{\overline{e}}{\overline{D} + \overline{I}} = \frac{(1/3)1 + (2/3)3}{(1/3)7 + (2/3)4 + 0} = 7/15 \approx 0.466667 \]
This is a significant savings over the average power of $0.6$ from the  deterministic policy.

\subsection{The Linear Fractional Program for Task Scheduling} 

Now consider the general problem \eqref{eq:energy}-\eqref{eq:I-constraint} for minimizing average 
power subject to average processing rate constraints for each of the $L$ classes.  Assume the 
problem is \emph{feasible}, so that it is possible to choose actions $(c[k], m[k], I[k])$ over frames to 
meet the desired constraints \eqref{eq:rate-constraint}-\eqref{eq:I-constraint}.  It can be shown that
an optimal solution can be achieved over the class of \emph{stationary and randomized policies} with 
the following structure:  Every frame, independently choose vector $(c[k], m[k])$ 
with some probabilities $p(c,m)= Pr[(c[k], m[k]) = (c,m)]$.  Further, use 
a constant idle time $I[k] = I$ for all frames $k$, for some constant $I$ that satisfies $0 \leq I \leq I_{max}$. 
Thus, the problem can be written as the following \emph{linear fractional program} with unknowns
$p(c,m)$ and $I$ and known constants $\hat{e}(c,m)$, $\hat{D}(c,m)$, $\lambda_n$, and $I_{max}$: 
\begin{eqnarray}
\mbox{Minimize:} && \frac{\sum_{c=1}^N\sum_{m\in\script{M}}p(c,m) \hat{e}(c,m)}{I + \sum_{c=1}^N\sum_{m\in\script{M}}p(c,m)\hat{D}(c,m)} \label{eq:lf1} \\
\mbox{Subject to:} && \frac{\sum_{m\in\script{M}}p(n,m)}{I + \sum_{c=1}^N\sum_{m\in\script{M}}p(c,m)\hat{D}(c,m)} \geq \lambda_n \: \: \: \:  \forall n \in \{1, \ldots, N\} \label{eq:lf2} \\
&& 0 \leq I \leq I_{max} \label{eq:lf3} \\
&& p(c,m) \geq 0 \: \: \: \: \forall c \in \{1, \ldots, N\}, \: \: \forall m \in \script{M} \label{eq:lf4}  \\
&& \sum_{c=1}^N\sum_{m\in\script{M}} p(c,m) = 1 \label{eq:lf5}  
\end{eqnarray}
where the numerator and denominator in \eqref{eq:lf1} are equal to $\overline{e}$ and $\overline{D} + \overline{I}$,
respectively, under this randomized algorithm, the numerator in the left-hand-side of \eqref{eq:lf2} is equal 
to $\overline{1}_n$, and the constraints \eqref{eq:lf4}-\eqref{eq:lf5} specify that $p(c,m)$ must be a valid
probability mass function. 

Linear fractional programs  can be solved in several ways.  One method uses a 
nonlinear change of variables to map the problem to a convex program \cite{boyd-convex}.  However, 
this method does not admit an \emph{online} implementation, because time averages are not preserved
through the nonlinear change of variables.   Below, an online algorithm is presented that makes decisions
every frame $k$.  The algorithm is \emph{not} a stationary and randomized algorithm as described above.  
However, it
yields time averages that satisfy the desired constraints of the 
problem \eqref{eq:energy}-\eqref{eq:I-constraint}, with a time average power expenditure that can be
pushed arbitrarily close to the optimal value.  A significant advantage of this approach is that it extends 
to treat cases with random task arrivals, without requiring knowledge of the $(\lambda_1, \ldots, \lambda_N)$
arrival rates, and to treat other problems with observed random events, without requiring knowledge
of the probability distribution for these events.  These extensions are shown in later sections. 

For later analysis, it is useful to write \eqref{eq:lf1}-\eqref{eq:lf5} in a simpler form. 
Let $power^{opt}$ be the optimal time average power for the above linear fractional program, 
achieved by some probability distribution $p^*(c,m)$ and idle time $I^*$ that satisfies $0 \leq I^* \leq I_{max}$. 
Let $(c^*[k], m^*[k], I^*[k])$ represent the frame $k$ decisions under this 
stationary and randomized policy.  Thus: 
\begin{eqnarray} 
\frac{\expect{\hat{e}(c^*[k], m^*[k])}}{\expect{I^*[k] + \hat{D}(c^*[k], m^*[k])}} &=& power^{opt}  \label{eq:sr1} \\
\frac{\expect{1_n^*[k]}}{\expect{I^*[k] + \hat{D}(c^*[k], m^*[k])}} &\geq& \lambda_n \: \: \: \: \forall n \in \{1, \ldots, N\} \label{eq:sr2}  
\end{eqnarray}  
where $1_n^*[k]$ is an indicator function that is $1$ if $c^*[k]=n$, and $0$ else.  The numerator
and denominator of \eqref{eq:sr1} correspond to those of \eqref{eq:lf1}. Likewise, the constraint 
\eqref{eq:sr2} corresponds to \eqref{eq:lf2}. 

\subsection{Virtual Queues} 

\begin{figure}[htbp]
   \centering
   \includegraphics[height=.7in, width=4in]{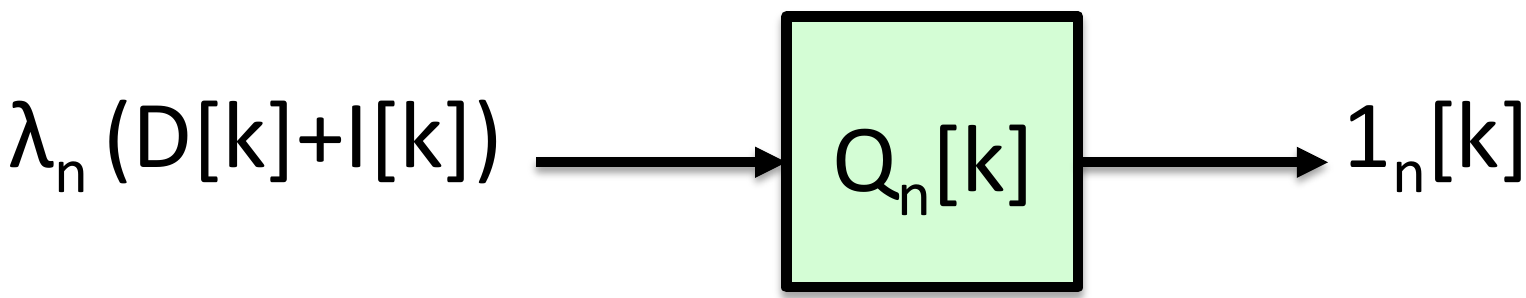} % requires the graphicx package
   \caption{An illustration of the virtual queue $Q_n[k]$ from equation \eqref{eq:z-update1}.}
   \label{fig:virtual-queue}
\end{figure}

To solve the problem \eqref{eq:energy}-\eqref{eq:I-constraint}, we first consider the 
constraints \eqref{eq:rate-constraint}, which are equivalent to the constraints: 
\begin{equation} 
 \lambda_n(\overline{D} + \overline{I}) \leq \overline{1}_n \: \: \: \: \forall n \in \{1, \ldots, N\}  \label{eq:c1} 
 \end{equation} 
For each constraint $n \in \{1, \ldots, N\}$, define a \emph{virtual queue} $Q_n[k]$ that is updated
on frames $k \in \{0, 1, 2, \ldots\}$ by: 
\begin{equation} \label{eq:z-update1} 
Q_n[k+1] = \max[Q_n[k] + \lambda_n(D[k] + I[k]) - 1_n[k], 0] 
\end{equation} 
The initial condition $Q_n[0]$ can be any non-negative value. For simplicity, it is assumed
throughout that $Q_n[0]=0$ for all $n \in \{1, \ldots, N\}$. The update \eqref{eq:z-update1} can be viewed
as a discrete time queueing equation, where $Q_n[k]$ is the backlog on frame $k$, $\lambda_n(D[k] + I[k])$
is an effective amount of ``new arrivals,'' and $1_n[k]$ is the amount of 
``offered service''  (see Fig. \ref{fig:virtual-queue}).  The intuition is that if all virtual queues $Q_n[k]$
are \emph{stable}, then the average ``arrival rate'' $\lambda_n(\overline{D} + \overline{I})$ must be less than or 
equal to the average ``service rate'' $\overline{1}_n$, which ensures the desired constraint \eqref{eq:c1}. 
This is made precise in the following lemma. 

\begin{lem} \label{lem:virtual-queues} (Virtual Queues) Suppose $Q_n[k]$ has update equation 
given by \eqref{eq:z-update1}, with any non-negative initial condition.   

a) For all $K \in \{1, 2, 3, \ldots\}$ we have: 
\begin{equation} \label{eq:parta} 
 \frac{1}{K}\sum_{k=0}^{K-1}[\lambda_n(D[k] + I[k]) - 1_n[k]] \leq \frac{Q_n[K] - Q_n[0]}{K} 
 \end{equation} 

b) If $\lim_{K\rightarrow\infty} Q_n[K]/K = 0$ with probability 1,  then: 
\begin{equation} \label{eq:desired1} 
\limsup_{K\rightarrow\infty} \frac{1}{K}\sum_{k=0}^{K-1}[\lambda_n(D[k] + I[k]) - 1_n[k]] \leq 0 \: \: \: \mbox{ with probability 1} 
\end{equation} 

c) If $\lim_{K\rightarrow\infty} \expect{Q_n[K]}/K = 0$, then: 
\begin{equation} \label{eq:desired2} 
 \limsup_{K\rightarrow\infty} [\lambda_n(\overline{D}[K] + \overline{I}[K]) - \overline{1}_n[K]] \leq 0 
 \end{equation} 
where $\overline{D}[K]$, $\overline{I}[K]$, $\overline{1}_n[K]$ are defined: 
\[ \mbox{$\overline{D}[K] \defequiv \frac{1}{K}\sum_{k=0}^{K-1}\expect{D[k]} \: \: , \: \: \overline{I}[K] \defequiv \frac{1}{K}\sum_{k=0}^{K-1}\expect{I[k]} \: \: , \: \: \overline{1}_n[K] \defequiv \frac{1}{K}\sum_{k=0}^{K-1}\expect{1_n[k]}$} \]
\end{lem}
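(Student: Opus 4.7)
The plan is to prove part (a) first by a direct telescoping argument off the virtual queue update equation, and then to derive parts (b) and (c) as nearly immediate consequences of the inequality in (a) by taking an appropriate limit (and, in the case of (c), first taking expectations).

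For part (a), I would start from the update rule \eqref{eq:z-update1} and observe that since $\max[x, 0] \geq x$ for any real $x$, we have the pointwise inequality
\[
Q_n[k+1] \;\geq\; Q_n[k] + \lambda_n(D[k]+I[k]) - 1_n[k].
\]
Rearranging gives $\lambda_n(D[k]+I[k]) - 1_n[k] \leq Q_n[k+1] - Q_n[k]$. Summing this over $k = 0, 1, \ldots, K-1$ causes the right-hand side to telescope to $Q_n[K] - Q_n[0]$, and dividing by $K$ yields the desired inequality \eqref{eq:parta}.

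Part (b) then follows by taking $\limsup_{K\to\infty}$ on both sides of \eqref{eq:parta}. The right-hand side $Q_n[K]/K - Q_n[0]/K$ tends to $0$ with probability 1: the first term by hypothesis and the second because $Q_n[0]$ is a fixed finite non-negative constant. This gives \eqref{eq:desired1}. Part (c) is handled analogously, except that I would first take expectations of both sides of \eqref{eq:parta} (using linearity of expectation to pass $\mathbb{E}[\cdot]$ inside the finite sum and noting that the constants $\lambda_n$ factor out). This produces
\[
\lambda_n(\overline{D}[K] + \overline{I}[K]) - \overline{1}_n[K] \;\leq\; \frac{\expect{Q_n[K]} - Q_n[0]}{K},
\]
after which taking $\limsup_{K\to\infty}$ and invoking the hypothesis $\expect{Q_n[K]}/K \to 0$ yields \eqref{eq:desired2}.

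There is no serious obstacle here; the entire lemma is a ``Lyapunov-style'' bookkeeping result. The only spot that requires even mild care is the first step, where one must remember to discard the outer $\max[\cdot,0]$ in the correct direction to obtain a lower bound on $Q_n[k+1]$, so that the rearrangement yields an upper bound on $\lambda_n(D[k]+I[k]) - 1_n[k]$ suitable for telescoping. Once that observation is in hand, the rest is routine.
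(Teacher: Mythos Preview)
Your proposal is correct and essentially identical to the paper's own proof: the paper also drops the $\max[\cdot,0]$ to obtain $Q_n[k+1] \geq Q_n[k] + \lambda_n(D[k]+I[k]) - 1_n[k]$, telescopes over $k\in\{0,\ldots,K-1\}$, and divides by $K$ for part (a), then derives (b) by taking a $\limsup$ and (c) by first taking expectations of \eqref{eq:parta} and then a $\limsup$. There is nothing to add.
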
 

\begin{proof} From \eqref{eq:z-update1} we have for all $k \in \{0, 1, 2, \ldots\}$: 
\[ Q_n[k+1] \geq Q_n[k] + \lambda_n(D[k] + I[k]) - 1_n[k] \]
Fixing a positive integer $K$ and summing the above over $k \in \{0, \ldots, K-1\}$ yields: 
\[ Q_n[K] - Q_n[0] \geq \sum_{k=0}^{K-1} [\lambda_n(D[k] + I[k]) - 1_n[k]] \]
Dividing the above by $K$ proves part (a).  Part (b) follows from  \eqref{eq:parta} 
by taking a $\limsup$. Part (c) follows by first taking expectations of \eqref{eq:parta} and then taking a $\limsup$. 
\end{proof} 

Inequality \eqref{eq:parta} shows that the value $Q_n[K]/K$ bounds the amount by which the 
desired constraint for class $n$ is violated by the time averages achieved over the first $K$ frames.  
Suppose that $D[k]$, $I[k]$, and $1_n[k]$ have frame averages
that converge to constants $\overline{D}$, $\overline{I}$, $\overline{1}_n$ with probability 1.  
Part (c) of Lemma \ref{lem:virtual-queues} 
indicates that if 
$\lim_{K\rightarrow\infty} \expect{Q_n[K]}/K = 0$ for all $n \in \{1, \ldots, N\}$,  then $\lambda_n(\overline{D} + \overline{I}) \leq \overline{1}_n$ for all $n \in \{1, \ldots, N\}$. 

In the language of queueing theory, a discrete time queue $Q[k]$ is said to be \emph{rate stable} 
if   $\lim_{k\rightarrow\infty} Q[k]/k = 0$ with probability 1, and is \emph{mean rate stable} if 
$\lim_{k\rightarrow\infty} \expect{Q[k]}/k = 0$ \cite{sno-text}.  With this terminology, the
above lemma shows that if $Q_n[k]$ is 
rate stable then 
\eqref{eq:desired1} holds, and if $Q_n[k]$ is mean rate stable then \eqref{eq:desired2} holds. 

\subsection{The Drift-Plus-Penalty Ratio} \label{section:dppr} 

To stabilize the queues while minimizing time average power, we use \emph{Lyapunov
optimization theory}, which gives rise to 
the \emph{drift-plus-penalty ratio algorithm} \cite{sno-text}. 
First define $L[k]$ as the sum of the squares of all queues on frame $k$ (divided by $2$ for convenience
later): 
\[ L[k] \defequiv \frac{1}{2}\sum_{n=1}^NQ_n[k]^2 \]
$L[k]$ is often called a \emph{Lyapunov function}, and acts as a scalar measure of the size of the 
queues.  Intuitively, keeping $L[k]$ small leads to stable queues, and we should take actions that
tend to shrink $L[k]$ from one frame to the next.  Define $\Delta[k]$ as the \emph{Lyapunov drift}, being the 
difference in the Lyapunov function from one frame to the next: 
\[ \Delta[k] \defequiv L[k+1] - L[k] \]

Taking actions to minimize $\Delta[k]$ every frame can be shown to ensure the desired 
constraints are satisfied whenever it is possible to satisfy them, but does not incorporate power minimization. 
To incorporate this, every frame $k$ we observe the current queue vector $\bv{Q}[k] = (Q_1[k], \ldots, Q_N[k])$
and choose control actions $(c[k], m[k], I[k])$ 
to minimize a bound on the following \emph{drift-plus-penalty ratio}: 
\[ \frac{\expect{ \Delta[k] + Ve[k]| \bv{Q}[k]}}{\expect{D[k] + I[k] | \bv{Q}[k]} } \]
where $V$ is a non-negative parameter that weights the extent to which power minimization is emphasized. 
The intuition is that the numerator incorporates both drift and energy. The denominator ``normalizes'' this
by the expected frame size, with the understanding that average power must include both energy and
frame size.  We soon show that this intuition is correct, in that all 
desired time average constraints are satisfied, and the time average power is within $O(1/V)$ of
the optimal value $power^{opt}$.  Hence, average power 
can be pushed arbitrarily close to optimal by using a sufficiently
large value of $V$.  The tradeoff is that average queue sizes grow with $V$, which impacts the convergence
time required to satisfy the desired constraints.  

The drift-plus-penalty ratio method was first developed for the context of restless bandit systems in 
\cite{chihping-utility-round-robin}\cite{chihping-utility-round-robin-wiopt}.  The method was used for
optimization of renewal systems in \cite{sno-text}\cite{renewals-asilomar2010}, which treat problems
similar to those considered in this paper. In the special case when all frame sizes are fixed and 
equal to one unit of time (a \emph{time slot}), and when $V=0$, the method reduces to observing queues $\bv{Q}[k]$ 
every slot $k$ and taking actions to minimize a bound on $\expect{\Delta[k]|\bv{Q}[k]}$. This is the 
rule that generates the classic max-weight scheduling 
algorithms for queue stability (without performance optimization), 
developed by Tassiulas and Ephremides in \cite{tass-radio-nets}\cite{tass-server-allocation}. 
For systems with unit time slots but with $V>0$, the \emph{drift-plus-penalty ratio} technique 
reduces to the \emph{drift-plus-penalty} technique of \cite{neely-thesis}\cite{now}\cite{neely-energy-it}, 
which treats joint queue stability and penalty minimization in systems with unit size slots.

\subsubsection{Bounding the Drift-Plus-Penalty Ratio}

To construct an explicit algorithm, we first bound the drift-plus-penalty ratio. 

\begin{lem} \label{lem:dpp-bound1} For all frames $k\in\{0, 1, 2, \ldots\}$, all possible 
$\bv{Q}[k]$, and under any decisions for $(c[k], m[k], I[k])$, we have: 
\begin{eqnarray}
\hspace{-.3in}\frac{\expect{\Delta[k] + Ve[k]|\bv{Q}[k]}}{\expect{D[k] + I[k]|\bv{Q}[k]}} &\leq& \frac{B}{\expect{D[k]+I[k]|\bv{Q}[k]}}
+ \frac{\expect{V\hat{e}(c[k],m[k])|\bv{Q}[k]}}{\expect{\hat{D}(c[k], m[k]) + I[k]|\bv{Q}[k]}} \nonumber \\
&& + \frac{\sum_{n=1}^NQ_n[k]\expect{\lambda_n(\hat{D}(c[k],m[k]) + I[k]) - 1_n[k]|\bv{Q}[k]}}{\expect{\hat{D}(c[k], m[k]) + I[k]|\bv{Q}[k]}} \label{eq:dpp1} 
\end{eqnarray}
where $B$ is a constant that satisfies the following for all possible $\bv{Q}[k]$ and all policies: 
\[ B \geq \frac{1}{2}\sum_{n=1}^N\expect{(\lambda_n(D[k] + I[k]) - 1_n[k])^2|\bv{Q}[k]} \]
Such a constant $B$ exists by the second moment boundedness assumptions \eqref{eq:smb}. 
\end{lem}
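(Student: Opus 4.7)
The plan is to proceed in the standard Lyapunov style: derive a per-frame bound on the drift $\Delta[k]$, add the scaled penalty $V e[k]$, take conditional expectations given $\bv{Q}[k]$, and finally divide through by the expected frame length. The only subtlety is being careful about iterated expectations so that the random frame quantities $D[k]$ and $e[k]$ get replaced by the deterministic conditional means $\hat{D}(c[k],m[k])$ and $\hat{e}(c[k],m[k])$ wherever needed.

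First, I would bound the drift. Starting from the virtual queue recursion \eqref{eq:z-update1} and using the elementary inequality $(\max[a,0])^2 \leq a^2$, squaring both sides yields
\[ Q_n[k+1]^2 \leq Q_n[k]^2 + 2 Q_n[k]\bigl(\lambda_n(D[k]+I[k]) - 1_n[k]\bigr) + \bigl(\lambda_n(D[k]+I[k]) - 1_n[k]\bigr)^2. \]
Summing over $n$, dividing by $2$, and rearranging gives a pathwise bound on $\Delta[k] = L[k+1]-L[k]$ in terms of $\bv{Q}[k]$ and the frame's random variables. Adding $V e[k]$ and conditioning on $\bv{Q}[k]$ then produces
\[ \expect{\Delta[k] + V e[k] \mid \bv{Q}[k]} \leq B + \sum_{n=1}^N Q_n[k]\,\expect{\lambda_n(D[k]+I[k]) - 1_n[k] \mid \bv{Q}[k]} + V\expect{e[k] \mid \bv{Q}[k]}, \]
where $B$ absorbs the second-moment term and is finite by the boundedness assumption \eqref{eq:smb}.

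Next I would apply the law of iterated expectations, as in \eqref{eq:iterated-expectations}, conditioning further on $(c[k],m[k])$. Because $D[k]$ and $e[k]$ are conditionally independent of the past given $(c[k],m[k])$, this replaces $\expect{D[k]\mid\bv{Q}[k]}$ by $\expect{\hat{D}(c[k],m[k])\mid\bv{Q}[k]}$ and similarly for $e[k]$. The indicator $1_n[k]$ is already determined by $c[k]$, so no replacement is needed there. The same identity also gives $\expect{D[k]+I[k]\mid\bv{Q}[k]} = \expect{\hat{D}(c[k],m[k]) + I[k] \mid \bv{Q}[k]}$, which is the denominator used on the right-hand side of \eqref{eq:dpp1}.

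Finally I would divide both sides of the bound by $\expect{D[k]+I[k]\mid\bv{Q}[k]}$, which is strictly positive since $D[k]\geq D_{min}>0$. The constant $B$ contributes the term $B/\expect{D[k]+I[k]\mid\bv{Q}[k]}$, and the remaining two terms land on the common denominator $\expect{\hat{D}(c[k],m[k]) + I[k]\mid\bv{Q}[k]}$, yielding exactly \eqref{eq:dpp1}. The main thing to watch is bookkeeping in the iterated-expectation step, so that the random quantities get replaced by $\hat{D}$ and $\hat{e}$ in the right places and the two denominators are identified; once that is handled, the rest is routine algebra.
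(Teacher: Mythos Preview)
Your proposal is correct and follows essentially the same approach as the paper: square the virtual-queue recursion using $\max[x,0]^2\leq x^2$, sum over $n$ to bound $\Delta[k]$, take conditional expectations (invoking iterated expectations to replace $D[k],e[k]$ by $\hat{D},\hat{e}$), and then divide by the common denominator. The only cosmetic difference is that the paper first observes the denominators coincide and reduces to proving the numerator inequality \eqref{eq:suffices}, whereas you divide at the end; the content is identical.
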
 
\begin{proof} 
Note by iterated expectations that (similar to \eqref{eq:iterated-expectations}):\footnote{In more detail, by iterated 
expectations we have 
$\expect{D[k]|\bv{Q}[k]} = \expect{\expect{D[k]|(c[k], m[k]), \bv{Q}[k]}|\bv{Q}[k]}$, and $\expect{D[k]|(c[k], m[k]), \bv{Q}[k]} = \expect{D[k]|(c[k], m[k])}$ because $D[k]$ is conditionally independent of the past given the current $(c[k], m[k])$ used.} 
\[ \expect{D[k]|\bv{Q}[k]} = \expect{\hat{D}(c[k], m[k])|\bv{Q}[k]} \: \: , \: \: \expect{e[k]|\bv{Q}[k]} = \expect{\hat{e}(c[k],m[k])|\bv{Q}[k]} \]
Thus, the denominator is common for all terms of inequality \eqref{eq:dpp1}, and it suffices to prove: 
\begin{equation} \label{eq:suffices} 
\expect{\Delta[k] | \bv{Q}[k]} \leq B + \sum_{n=1}^NQ_n[k]\expect{\lambda_n(\hat{D}(c[k],m[k]) + I[k]) - 1_n[k]|\bv{Q}[k]}
\end{equation} 
To this end, by squaring \eqref{eq:z-update1} and noting that $\max[x,0]^2 \leq x^2$, 
we have for each $n$: 
\begin{eqnarray*}
\frac{1}{2}Q_n[k+1]^2 &\leq& \frac{1}{2}(Q_n[k] + \lambda_n (D[k] + I[k]) - 1_n[k])^2 \\
&=& \frac{1}{2}Q_n[k]^2 + \frac{1}{2}(\lambda_n(D[k]+I[k]) - 1_n[k])^2 + Q_n[k](\lambda_n(D[k]+I[k]) - 1_n[k])
\end{eqnarray*}
Summing the above over $n \in \{1, \ldots, N\}$ and using the definition of $\Delta[k]$ gives: 
\[ \Delta[k] \leq \frac{1}{2}\sum_{n=1}^N(\lambda_n(D[k] + I[k]) - 1_n[k])^2 + \sum_{n=1}^NQ_n[k](\lambda_n(D[k]+ I[k]) - 1_n[k]) \]
Taking conditional expectations given $\bv{Q}[k]$ and using the bound $B$ 
proves \eqref{eq:suffices}.
\end{proof} 

\subsubsection{The Task Scheduling Algorithm} 

Our algorithm takes actions every frame to minimize the last two terms on the right-hand-side of 
the drift-plus-penalty ratio bound \eqref{eq:dpp1}.  The only part of these terms 
that we have control over on frame $k$ (given the observed $\bv{Q}[k]$) is given below: 
\[ \frac{\expect{V\hat{e}(c[k], m[k]) - \sum_{n=1}^NQ_n[k]1_n[k]|\bv{Q}[k]}}{\expect{\hat{D}(c[k], m[k]) + I[k]|\bv{Q}[k]}} \]
Recall from Lemma \ref{lem:deterministic-min} 
that minimizing the above ratio of expectations is accomplished
over a deterministic choice of $(c[k], m[k], I[k])$. Thus, every frame $k$ we perform the 
following:  

\begin{itemize} 
\item Observe queues $\bv{Q}[k] = (Q_1[k], \ldots, Q_N[k])$.  Then choose $c[k] \in \{1, \ldots, N\}$, 
$m[k] \in \script{M}$, and $I[k]$ such that $0 \leq I[k] \leq I_{max}$ to minimize: 
\begin{eqnarray}
\frac{V\hat{e}(c[k], m[k]) - Q_{c[k]}[k]}{\hat{D}(c[k],m[k]) + I[k]} \label{eq:minimize} 
\end{eqnarray}
\item Update queues $Q_n[k]$ for each $n \in \{1, \ldots, N\}$ 
via \eqref{eq:z-update1}, using the $D[k]$, $I[k]$, and $1_n[k]$ values
that result from the decisions $c[k]$, $m[k]$, $I[k]$ that minimized \eqref{eq:minimize}  
\end{itemize} 

\subsubsection{Steps to minimize \eqref{eq:minimize}} 

Here we elaborate on how to perform the minimization in  \eqref{eq:minimize} for each frame $k$. 
For each $c \in \{1, \ldots, N\}$ and $m \in \script{M}$, 
 define  $idle(c,m)$ as the value of $I[k]$ that minimizes
\eqref{eq:minimize}, given that we have $(c[k], m[k]) = (c,m)$. It is easy to see that: 
\[ idle(c,m) =  \left\{ \begin{array}{ll}
                          0 &\mbox{ if $V\hat{e}(c,m) - Q_c[k] \leq 0$} \\
                            I_{max}  & \mbox{ otherwise} 
                            \end{array}
                                 \right.\]
Now define $val(c,m)$ by: 
\[ val(c,m) = \frac{V\hat{e}(c,m) - Q_c[k]}{\hat{D}(c,m) + idle(c,m)} \]
Then we choose $(c[k], m[k])$ as the minimizer of $val(c,m)$ over $c \in \{1, \ldots, N\}$ and $m \in \script{M}$, 
breaking ties arbitrarily, and choose $I[k] = idle(c[k], m[k])$. 
Note that this algorithm chooses $I[k] = 0$ or $I[k] = I_{max}$ on every frame $k$.  
Nevertheless, it results in a \emph{frame average} $\overline{I}$ that approaches optimality for large $V$. 

\subsection{Performance of the Task Scheduling Algorithm} 

For simplicity, the performance theorem is presented in terms of zero initial conditions.
It is assumed throughout that the problem \eqref{eq:energy}-\eqref{eq:I-constraint} is feasible,
so that it is possible to satisfy the constraints. 

\begin{thm} \label{thm:perf1} Suppose $Q_n[0] = 0$ for all $n \in \{1, \ldots, N\}$, and that the problem 
\eqref{eq:energy}-\eqref{eq:I-constraint} is feasible. Then under the above task scheduling
algorithm: 

a) For all frames $K \in \{1, 2, 3, \dots\}$ we have:\footnote{The right-hand-side in 
\eqref{eq:g1} can be simplified to $power^{opt} + B/(VD_{min})$, since all frames are at least
$D_{min}$ in size.} 
\begin{equation} \label{eq:g1} 
 \frac{\overline{e}[K]}{\overline{D}[K] + \overline{I}[K]} \leq power^{opt} + \frac{B}{V(\overline{D}[K]+\overline{I}[K])} 
 \end{equation} 
 where $B$ is defined in Lemma \ref{lem:dpp-bound1}, 
 $power^{opt}$ is the minimum power solution for the problem \eqref{eq:energy}-\eqref{eq:I-constraint},
and $\overline{e}[K]$, $\overline{D}[K]$, $\overline{I}[K]$ are defined by: 
\[ \mbox{$\overline{e}[K] \defequiv \frac{1}{K}\sum_{k=0}^{K-1} \expect{e[k]} \: \: , \: \: \overline{D}[K] \defequiv \frac{1}{K}\sum_{k=0}^{K-1} \expect{D[k]} \: \: , \: \: \overline{I}[K] \defequiv \frac{1}{K}\sum_{k=0}^{K-1} \expect{I[k]}$} \]

b) The desired constraints 
\eqref{eq:desired1} and \eqref{eq:desired2} are satisfied for all $n \in \{1, \ldots, N\}$.  
Further, we have for each frame $K \in \{1, 2, 3, \ldots\}$: 
\begin{equation} \label{eq:g2} 
 \frac{\expect{\norm{\bv{Q}[k}}}{K} \leq \sqrt{\frac{2(B+V\beta)}{K}} 
 \end{equation} 
where $\norm{\bv{Q}[k]} \defequiv \sqrt{\sum_{n=1}^NQ_n[k]^2}$ is the norm of the queue vector (being
at least as large as each component $Q_n[k]$), 
and $\beta$ is a constant that satisfies the following for all frames $k$: 
\[ \beta \geq \expect{power^{opt}(D[k] + I[k]) - e[k]} \]
Such a 
constant $\beta$ exists because the second moments (and hence first moments) are bounded. 
\end{thm}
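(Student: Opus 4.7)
The plan is to run a standard drift-plus-penalty comparison argument against the stationary randomized optimal policy described by \eqref{eq:sr1}-\eqref{eq:sr2}, using Lemma \ref{lem:dpp-bound1} as the central tool. First I would note that the algorithm is constructed precisely to minimize the controllable portion of the right-hand side of \eqref{eq:dpp1}, i.e., the sum of the second and third terms (which share a common denominator), and by Lemma \ref{lem:deterministic-min} this deterministic minimization is no worse than any randomized choice. In particular, it is no worse than plugging in the stationary randomized optimal decisions $(c^*[k], m^*[k], I^*[k])$, which are independent of $\bv{Q}[k]$: the second term then collapses to $V\cdot power^{opt}$ by \eqref{eq:sr1}, and the third term becomes nonpositive because $\mathbb{E}[\lambda_n(\hat{D}(c^*,m^*)+I^*) - 1_n^*]\leq 0$ by \eqref{eq:sr2} while $Q_n[k]\geq 0$.

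Plugging this back into \eqref{eq:dpp1} and clearing the positive denominator $\mathbb{E}[D[k]+I[k]|\bv{Q}[k]]$ yields the key per-frame inequality
\begin{equation*}
\mathbb{E}[\Delta[k] + Ve[k]\mid \bv{Q}[k]] \leq B + V\cdot power^{opt}\cdot \mathbb{E}[D[k]+I[k]\mid \bv{Q}[k]].
\end{equation*}
Taking total expectations, summing over $k\in\{0,\ldots,K-1\}$, telescoping $\sum_k \mathbb{E}[\Delta[k]] = \mathbb{E}[L[K]] - \mathbb{E}[L[0]]$, using $L[0]=0$ and $\mathbb{E}[L[K]]\geq 0$, and dividing by $VK$ gives $\overline{e}[K] \leq B/V + power^{opt}(\overline{D}[K]+\overline{I}[K])$. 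Dividing through by $\overline{D}[K]+\overline{I}[K]$ (which is at least $D_{min}>0$) then yields \eqref{eq:g1}, proving part (a).

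For part (b), I would rewrite the same per-frame inequality as $\mathbb{E}[\Delta[k]] \leq B + V\mathbb{E}[power^{opt}(D[k]+I[k]) - e[k]] \leq B + V\beta$, using the definition of $\beta$. Summing and telescoping gives $\tfrac{1}{2}\mathbb{E}[\|\bv{Q}[K]\|^2] = \mathbb{E}[L[K]] \leq K(B+V\beta)$, and Jensen's inequality $\mathbb{E}[\|\bv{Q}[K]\|] \leq \sqrt{\mathbb{E}[\|\bv{Q}[K]\|^2]}$ yields \eqref{eq:g2}. This in particular forces $\mathbb{E}[Q_n[K]]/K \to 0$ for every $n$, so each virtual queue is mean rate stable, and Lemma \ref{lem:virtual-queues}(c) delivers \eqref{eq:desired2}; the almost-sure statement \eqref{eq:desired1} follows from Lemma \ref{lem:virtual-queues}(b) once one upgrades mean rate stability to rate stability (e.g., via a Borel--Cantelli argument applied along a subsequence $K_j$ with $\sum_j \mathbb{E}[\|\bv{Q}[K_j]\|]/K_j < \infty$, combined with the bounded per-frame change $|Q_n[k+1]-Q_n[k]|$ guaranteed by the second-moment bounds \eqref{eq:smb}).

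The main obstacle is the first step: carefully justifying that our algorithm's choice, which minimizes the single pooled ratio in \eqref{eq:minimize}, in fact dominates the ratio achieved by the stationary randomized optimum when substituted into the right-hand side of \eqref{eq:dpp1}. This requires invoking Lemma \ref{lem:deterministic-min} on the pooled numerator/denominator and then observing that adding the constant term $\sum_n Q_n[k]\lambda_n$ into the numerator (which is what converts the controllable part of \eqref{eq:dpp1} into the form of \eqref{eq:minimize}) does not alter the minimizing action. The remainder of the proof is essentially bookkeeping with telescoping sums and Jensen's inequality.
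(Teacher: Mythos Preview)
Your proposal is correct and follows essentially the same route as the paper: compare the algorithm's choice to the stationary randomized optimum inside the bound \eqref{eq:dpp1}, collapse the second term to $V\cdot power^{opt}$ via \eqref{eq:sr1} and kill the third term via \eqref{eq:sr2}, clear the denominator, take expectations, telescope, and divide; then for part (b) rearrange the same inequality to bound $\expect{\Delta[k]}$ by $B+V\beta$, sum to control $\expect{L[K]}$, and apply Jensen. The only cosmetic difference is that the paper obtains rate stability (and hence \eqref{eq:desired1}) by citing an external result in \cite{lyap-opt2} rather than sketching a Borel--Cantelli argument as you do; your identification of the ``main obstacle'' (that the constant $\sum_n Q_n[k]\lambda_n$ separates out of the pooled ratio, so minimizing \eqref{eq:minimize} really does minimize the controllable part of \eqref{eq:dpp1}) is exactly the point the paper leaves implicit.
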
 

In the special case of a deterministic system, all expectations of the above theorem 
can be removed, and the results hold deterministically for all frames $K$.  Theorem \ref{thm:perf1} 
indicates that average power can be pushed arbitrarily close to $power^{opt}$, using the $V$ parameter
that affects an $O(1/V)$ performance gap given in \eqref{eq:g1}.  The tradeoff is that $V$ increases the 
expected size of $\expect{Q_n[K]}/K$ as shown in \eqref{eq:g2}, which bounds the expected deviation 
from the $n$th constraint during the first $K$ frames (recall \eqref{eq:parta} from Lemma 
 \ref{lem:virtual-queues}). Under a mild additional ``Slater-type'' assumption that ensures 
 all constraints can be satisfied with ``$\epsilon$-slackness,'' 
 a stronger result on the virtual queues can be shown, namely, that the same
 algorithm yields queues with average size $O(V)$ \cite{sno-text}.  This typically ensures a tighter constraint tradeoff 
 than that given in \eqref{eq:g2}.
 A related improved tradeoff is explored in more detail in Section  \ref{section:random-task-arrivals}.

\begin{proof} (Theorem \ref{thm:perf1} part (a)) 
Given $\bv{Q}[k]$ for frame $k$, our control decisions minimize the last two terms in the right-hand-side of the 
drift-plus-penalty ratio bound \eqref{eq:dpp1}, and hence: 
\begin{eqnarray}
\hspace{-.3in}\frac{\expect{\Delta[k] + Ve[k]|\bv{Q}[k]}}{\expect{D[k] + I[k]|\bv{Q}[k]}} &\leq& \frac{B}{\expect{D[k]+I[k]|\bv{Q}[k]}}
+ \frac{\expect{V\hat{e}(c^*[k],m^*[k])|\bv{Q}[k]}}{\expect{\hat{D}(c^*[k], m^*[k]) + I^*[k]|\bv{Q}[k]}} \nonumber \\
&& + \frac{\sum_{n=1}^NQ_n[k]\expect{\lambda_n(\hat{D}(c^*[k],m^*[k]) + I^*[k]) - 1_n^*[k]|\bv{Q}[k]}}{\expect{\hat{D}(c^*[k], m^*[k]) + I^*[k]|\bv{Q}[k]}} \label{eq:dpp1-proof} 
\end{eqnarray}
where $c^*[k]$, $m^*[k]$, $I^*[k]$, $1_n^*[k]$ are from any alternative (possibly randomized) decisions that
can be made on frame $k$. Now recall the existence of stationary and randomized decisions that
yield \eqref{eq:sr1}-\eqref{eq:sr2}.  In particular, these decisions are independent of queue backlog
$\bv{Q}[k]$ and thus yield (from \eqref{eq:sr1}): 
\begin{eqnarray*}
\frac{\expect{\hat{e}(c^*[k], m^*[k])|\bv{Q}[k]}}{\expect{\hat{D}(c^*[k], m^*[k]) + I^*[k]|\bv{Q}[k]}} = \frac{\expect{\hat{e}(c^*[k], m^*[k])}}{\expect{\hat{D}(c^*[k], m^*[k]) + I^*[k]}} 
= power^{opt}  
\end{eqnarray*}
and for all $n \in \{1, \ldots, N\}$ we have (from \eqref{eq:sr2}): 
\begin{eqnarray*}
\expect{\lambda_n(\hat{D}(c^*[k], m^*[k])+I^*[k]) - 1_n^*[k]|\bv{Q}[k]} = \expect{\lambda_n(\hat{D}(c^*[k], m^*[k])+I^*[k]) - 1_n^*[k]} \leq 0
\end{eqnarray*}
Plugging the above into the right-hand-side of \eqref{eq:dpp1-proof} yields: 
\begin{eqnarray*}
\frac{\expect{\Delta[k] + Ve[k]|\bv{Q}[k]}}{\expect{D[k] + I[k]|\bv{Q}[k]}} &\leq& \frac{B}{\expect{D[k]+I[k]|\bv{Q}[k]}}
+ Vpower^{opt} 
\end{eqnarray*}
Rearranging terms gives: 
\begin{eqnarray*}
\expect{\Delta[k] + Ve[k]|\bv{Q}[k]} \leq B + Vpower^{opt} \expect{D[k] + I[k]|\bv{Q}[k]} 
\end{eqnarray*}
Taking expectations of the above (with respect to the random $\bv{Q}[k]$) and using the law
of iterated expectations gives: 
\begin{eqnarray}
\expect{\Delta[k] + Ve[k]} \leq B + Vpower^{opt} \expect{D[k] + I[k]} \label{eq:gives} 
\end{eqnarray}
The above holds for all $k \in \{0, 1, 2, \ldots\}$.  Fixing a positive integer $K$ and summing \eqref{eq:gives} 
over $k \in \{0, 1, \ldots, K-1\}$ yields, by the definition  $\Delta[k] = L[k+1]-L[k]$: 
\[ \expect{L[K] - L[0]} + V\sum_{k=0}^{K-1}\expect{e[k]} \leq BK + Vpower^{opt} \sum_{k=0}^{K-1} \expect{D[k]+I[k]} \]
Noting that $L[0]=0$ and $L[K]\geq 0$ and using the definitions of  
$\overline{e}[K]$, $\overline{D}[K]$, $\overline{I}[K]$ yields: 
\[ VK\overline{e}[K] \leq BK + VKpower^{opt}(\overline{D}[K]+\overline{I}[K]) \]
Rearranging terms yields the result of part (a). 
\end{proof} 

\begin{proof} (Theorem \ref{thm:perf1} part (b)) 
To prove part (b), note from \eqref{eq:gives} we have: 
\[ \expect{\Delta[k]} \leq B + V\expect{power^{opt}(D[k] + I[k]) - e[k]} \leq B + V\beta \]
Summing the above over $k \in \{0, 1, \ldots, K-1\}$ gives: 
\[ \expect{L[K]} - \expect{L[0]} \leq (B + V\beta)K \]
Using the definition of $L[K]$ and noting that $L[0]=0$ gives: 
\[ \sum_{l=1}^K\expect{Q_n[K]^2} \leq 2(B+V\beta)K \]
Thus, we have $\expect{\norm{\bv{Q}[K]}^2} \leq 2(B+V\beta)K$. Jensen's inequality for $f(x) = x^2$ 
ensures
$\expect{\norm{\bv{Q}[k]}}^2 \leq \expect{\norm{\bv{Q}[k]}^2}$, and so for all positive integers $K$: 
\begin{equation} \label{eq:blah}  
\expect{\norm{\bv{Q}[k]}}^2 \leq 2(B+V\beta)K 
\end{equation} 
Taking a square root of both sides of \eqref{eq:blah} and dividing by $K$ 
proves \eqref{eq:g2}.  From \eqref{eq:g2} we have for each $n \in \{1, \ldots, N\}$: 
\[ \lim_{K\rightarrow\infty} \frac{\expect{Q_n[k]}}{K} \leq \lim_{K\rightarrow\infty} \frac{\expect{\norm{\bv{Q}[K]}}}{K} \leq \lim_{K\rightarrow\infty} \frac{\sqrt{2(B+V\beta)}}{\sqrt{K}} = 0 \]
and hence by Lemma \ref{lem:virtual-queues} we know constraint \eqref{eq:desired2} holds. 
 Further, in \cite{lyap-opt2} it is shown that \eqref{eq:blah} together with the fact that second moments
of queue changes are bounded  implies $\lim_{k\rightarrow\infty}Q_n[k]/k=0$ with probability 1.  Thus, 
 \eqref{eq:desired1} holds. 
\end{proof} 

\subsection{Simulation}  \label{section:sim1} 

We first simulate the task scheduling 
algorithm for the simple deterministic system with one class and one constraint, as described in 
Section \ref{section:one-class-example}.  The $\hat{e}(m)$ and $\hat{D}(m)$ functions 
are defined in \eqref{eq:m-value1}-\eqref{eq:m-value2}, and the goal is to minimize average
power subject to a processing rate constraint $1/(\overline{D} + \overline{I}) \geq 0.2$.  
We already know the optimal power is $power^{opt} = 7/15 \approx 0.466667$.
We expect the algorithm to approach this optimal power as $V$ is increased, and to approach 
the desired behavior of using $I[k]=0$ for all $k$, meeting the constraint with equality, 
and using $m[k]=1$ for $1/3$ of the frames.  This is indeed what happens, although in this
simple case the algorithm seems insensitive to the $V$ parameter and 
locks into a desirable periodic schedule even for very low (but positive) $V$ values. 
Using $V=1$ and one million
frames, the algorithm gets average power $0.466661$, uses $m[k]=1$ a fraction of time $0.333340$, 
has average idle time $\overline{I} = 0.000010$, and yields a processing rate $0.199999$ (almost
exactly equal to the desired constraint of $0.2$).  Increasing $V$ yields similar performance.  
The constraint is still satisfied when we decrease
the value of $V$, but average power degrades (being $0.526316$ for $V=0$). 

\begin{figure}[cht]
\centering
\begin{minipage}{2.6in}  
\includegraphics[height=2.1in, width=2.7in]{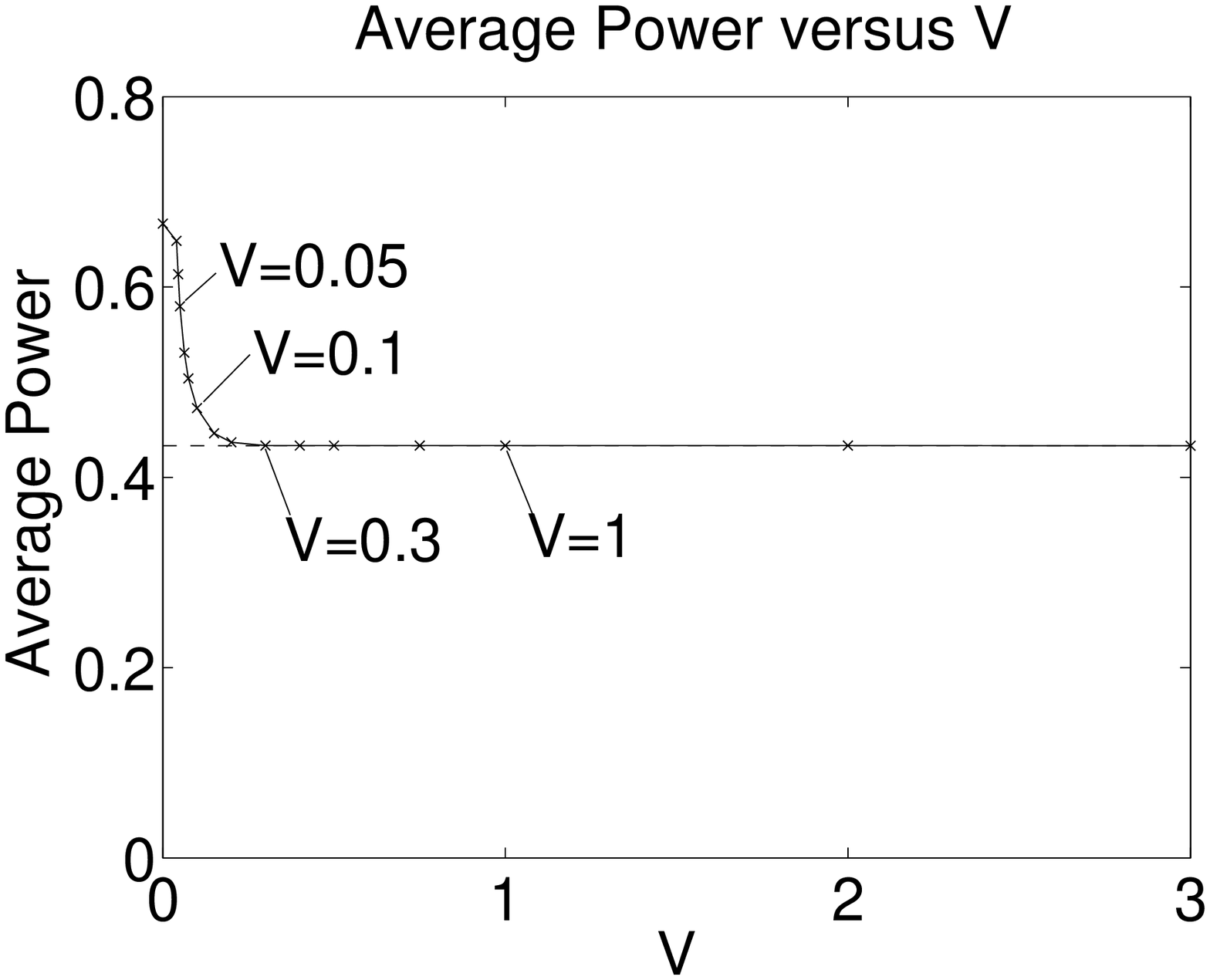}
\caption{Average power versus $V$.}
\label{fig:plot1}
\end{minipage}
\qquad
\qquad
\begin{minipage}{2.6in}
  \includegraphics[height=2.1in, width=2.7in]{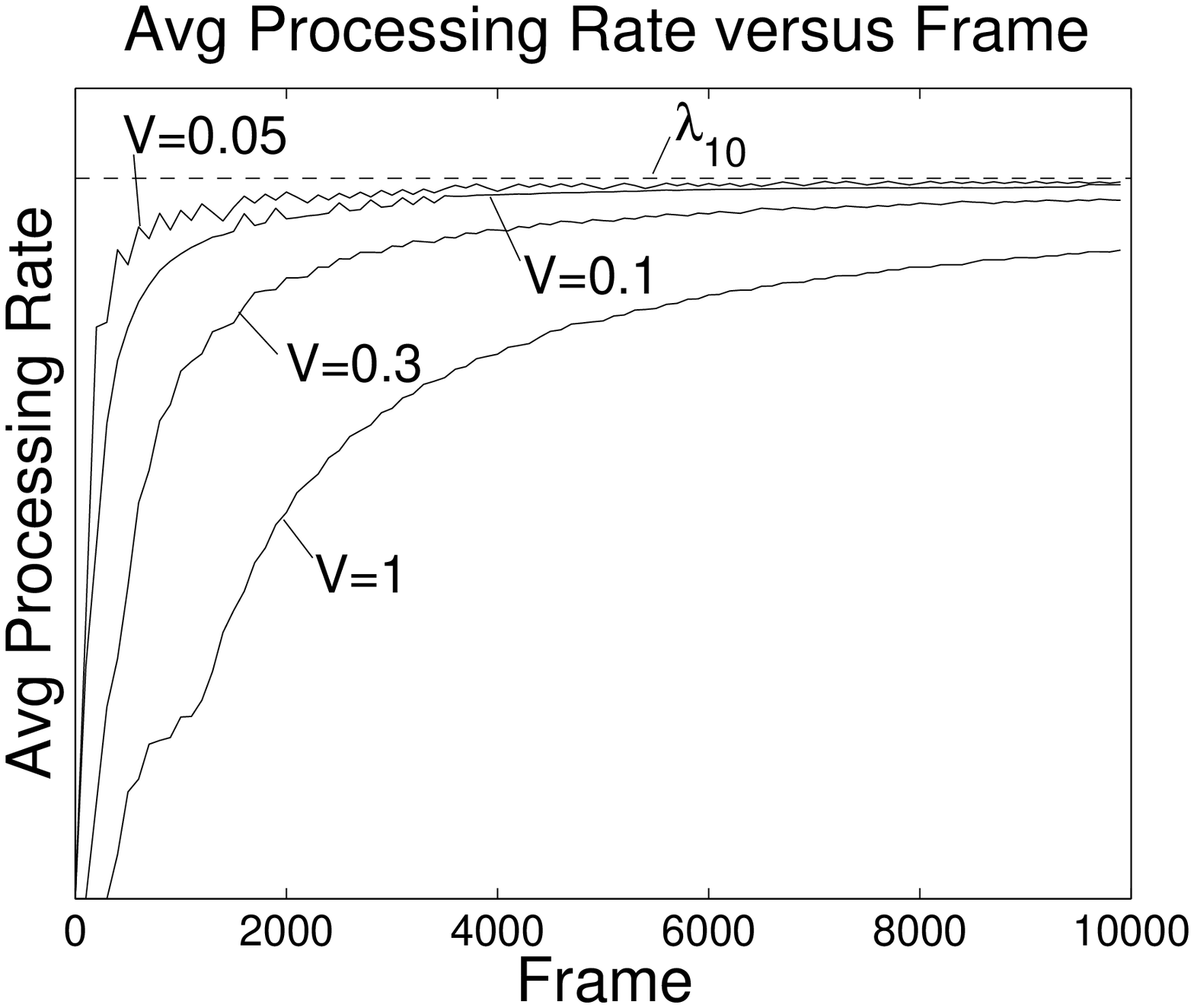} 
  \caption{Processing Rate $\overline{1}_{10}/\overline{T}$ versus frame index.}
  \label{fig:plot2}
\end{minipage}
\end{figure}

We next consider a system with 10 classes of tasks and two processing modes. 
 The energy and delay characteristics for each class $i \in \{1, \ldots, 10\}$ and mode $m \in \{1, 2\}$ 
are: 
\begin{eqnarray}
\mbox{Mode 1:} & (\hat{e}(i, 1), \hat{D}(i, 1)) = (1i, 5i)  \label{eq:ehatrand1} \\
\mbox{Mode 2:} & (\hat{e}(i,2), \hat{D}(i,2)) = (2i, 3i) \label{eq:ehatrand2} 
\end{eqnarray}
so that mode 1 uses less energy but takes longer than mode 2, and the computational
requirements for each class increase with $i \in \{1, \ldots, 10\}$.  We assume 
desired rates are given by $\lambda_i = \rho/(30i)$ for $i \in \{1, \ldots, 10\}$, for some positive value $\rho$.  
The problem is feasible 
whenever $\rho \leq 1$.  We use $\rho = 0.8$ and run the simulation for 10 million frames. 
Fig. \ref{fig:plot1} shows the resulting average power as $V$ is varied between $0$
and $3$, which converges to near optimal after $V=0.3$.   All 10  processing rate constraints are met within
5 decimal points of accuracy after the 10 million frames.  An illustration of how convergence time is affected
by the $V$ parameter is shown in Fig. \ref{fig:plot2}, which illustrates the average processing rate 
$\overline{1}_{10}/\overline{T}$ 
for class 10,  as compared to the desired constraint $\lambda_{10} = \rho/300$.
It is seen, for example, that convergence is faster for $V=0.05$ than for $V=1$.  Convergence times can be 
improved using non-zero initial queue backlog and the theory of \emph{place holder backlog} in \cite{sno-text}, 
although we omit this topic for brevity.

\section{Optimization with General Attributes} \label{section:general-attributes} 

This section generalizes the problem to allow time average 
optimization for abstract \emph{attributes}.  Consider again a frame-based system with frame
index $k \in \{0, 1, 2, \ldots\}$.   Every frame $k$, the controller makes a \emph{control action} $\alpha[k]$, 
chosen within an abstract set $\script{A}$ of allowable actions.  The action $\alpha[k]$ affects
the \emph{frame size} $T[k]$ and an \emph{attribute vector} 
$\bv{y}[k] = (y_0[k], y_1[k], \ldots, y_L[k])$.  Specifically, assume these are random functions that
are conditionally independent of the past given the current $\alpha[k]$ decision, with mean values
given by functions $\hat{T}(\alpha)$ and $\hat{y}_l(\alpha)$ for all $\alpha \in \script{A}$: 
\[ \hat{T}(\alpha) = \expect{T[k]|\alpha[k]=\alpha} \: \: , \: \: \hat{y}_l(\alpha) = \expect{y_l[k]|\alpha[k]=\alpha} \]
Similar to the previous section, it is assumed there is a minimum 
frame size $T_{min}>0$ such that $T[k] \geq T_{min}$
for all $k$, and that second moments are bounded by a constant $\sigma^2$, regardless of the policy $\alpha[k]$. 
The joint distribution of $(T[k], y_0[k], y_1[k], \ldots, y_L[k])$ is otherwise arbitrary. 

Define frame averages $\overline{T}$ and $\overline{y}_l$ by: 
\[ \overline{T} = \lim_{K\rightarrow\infty}\frac{1}{K}\sum_{k=0}^{K-1} T[k] \: \: \: , \: \: \: \overline{y}_l = \lim_{K\rightarrow\infty} \frac{1}{K}\sum_{k=0}^{K-1}y_l[k] \]
As discussed in Section \ref{section:time-average-as-frame-average}, 
the value $\overline{y}_l/\overline{T}$ represents the \emph{time average} associated with attribute $y_l[k]$. 
The general problem is then: 
\begin{eqnarray} 
\mbox{Minimize:} && \overline{y}_0/\overline{T} \label{eq:p0} \\
\mbox{Subject to:} && \overline{y}_l/\overline{T} \leq c_l \: \: \: \forall l \in \{1, \ldots, L\} \label{eq:p1} \\
&& \alpha[k] \in \script{A} \: \: \: \forall k \in \{0, 1, 2, \ldots\} \label{eq:p2}  
\end{eqnarray}
where $c_1, \ldots, c_L$ are given constants that specify the desired time average constraints. 

\subsection{Mapping to the Task Scheduling Problem} 

To illustrate the generality of this framework, this subsection uses the new 
notation to exactly represent the task scheduling problem from Section \ref{section:task-scheduling}.   
For that problem, one can define the control action $\alpha[k]$ to have the form
$\alpha[k] = (c[k], m[k], I[k])$, and the action space $\script{A}$ is then the
set of all $(c,m,I)$ such that $c \in \{1, \ldots, N\}$, $m \in \script{M}$, and $0 \leq I \leq I_{max}$. 

The frame size is $T[k] = D[k] + I[k]$, and $\hat{T}(\alpha[k])$ is given by: 
\[ \hat{T}(\alpha[k]) = \hat{D}(c[k],m[k]) + I[k] \]
We then define $y_0[k]$ as the energy expended in frame $k$, so that $y_0[k] = e[k]$ and 
$\hat{y}_0(\alpha[k]) = \hat{e}(c[k], m[k])$.  There are $N$ constraints, so define $L=N$. 
To express the desired constraints $\overline{1}_n/\overline{T} \geq \lambda_n$ in the form $\overline{y}_n/\overline{T} \leq c_n$, one  
can define $y_n[k] = -1_n[k]$ and $c_n = -\lambda_n$ for each $n \in \{1, \ldots, N\}$,  
and $\hat{y}_n(\alpha[k]) = -1_n[k]$.  Alternatively, one 
could define $y_n[k] = \lambda_nT[k] - 1_n[k]$ and enforce the constraint $\overline{y}_n \leq 0$ for all $n \in \{1, \ldots, N\}$. 

This general setup provides more flexibility.  For example, suppose the idle state does not 
use 0 energy, but operates at a low power $p_{idle}$ and expends total energy $p_{idle}I[k]$ on frame
$k$.  Then total energy for frame $k$ 
can be defined as $y_0[k] = e[k] + p_{idle}I[k]$, where $e[k]$ is the
energy spent in the busy period.  The setup can also handle systems with multiple idle mode options, 
each providing a different energy savings but 
incurring a different wakeup time. 
%As another extension, we can allow the action
%space $\script{A}$ to include the possibility of choosing \emph{no} task in a busy period, taking some small
%amount of time, to allow the possibility of skipping some busy periods to save additional energy. 
%More substantial extensions that allow randomly arriving tasks,  include \emph{quality} 
%attributes and average power constraints, and solve general linear fractional problems, are considered
%in Section \ref{section:randomly-arriving-tasks} and in the Exercises. 

\subsection{The General Algorithm} 

The algorithm for solving the general problem \eqref{eq:p0}-\eqref{eq:p2} is described below. 
Each constraint $\overline{y}_l \leq c_l\overline{T}$ in \eqref{eq:p1} is treated using a virtual
queue $Q_l[k]$ with update: 
\begin{equation} \label{eq:z-update} 
Q_l[k+1] = \max[Q_l[k] + y_l[k] - c_lT[k], 0] \: \: \forall l \in \{1, \ldots, L\} 
\end{equation} 
Defining $L[k]$ and $\Delta[k]$ as before (in Section \ref{section:dppr}) leads to the following 
bound on the drift-plus-penalty ratio, which can be proven in a manner similar to Lemma \ref{lem:dpp-bound1}: 
\begin{eqnarray}
\frac{\expect{\Delta[k] + Vy_0[k]|\bv{Q}[k]}}{\expect{T[k]|\bv{Q}[k]}} \leq \frac{B}{\expect{T[k]|\bv{Q}[k]}}
+ \frac{\expect{V\hat{y}_0(\alpha[k]) + \sum_{l=1}^LQ_l[k]\hat{y}_l(\alpha[k])|\bv{Q}[k]}}{\expect{\hat{T}(\alpha[k])|\bv{Q}[k]}}  \label{eq:dpp} 
\end{eqnarray}
where $B$ is a constant that satisfies the following for all $\bv{Q}[k]$ and all possible actions $\alpha[k]$: 
\[ B \geq \frac{1}{2}\sum_{l=1}^L\expect{(y_l[k] - c_lT[k])^2|\bv{Q}[k]} \]

Every frame, the controller observes queues $\bv{Q}[k]$ and 
 takes an action $\alpha[k] \in \script{A}$ that minimizes the second term on the right-hand-side of \eqref{eq:dpp}. 
We know from Lemma \ref{lem:deterministic-min} that minimizing the ratio of expectations
is accomplished by a deterministic selection of $\alpha[k] \in \script{A}$.  The resulting algorithm is: 

\begin{itemize} 
\item Observe $\bv{Q}[k]$ and choose $\alpha[k] \in \script{A}$ to minimize (breaking ties arbitrarily): 
\begin{equation} \label{eq:new-min} 
 \frac{V\hat{y}_0(\alpha[k]) + \sum_{l=1}^LQ_l[k]\hat{y}_l(\alpha[k])}{\hat{T}(\alpha[k])} 
 \end{equation} 

\item Update virtual queues $Q_l[k]$ for each $l \in \{1, \ldots, L\}$ via \eqref{eq:z-update}.
\end{itemize} 

One subtlety is that the expression \eqref{eq:new-min} may not have an achievable minimum over
the general (possibly infinite) set $\script{A}$ (for example, the infimum of the function $f(x) = x$ over the 
open interval $0 < x < 1$ is not achievable over that interval).  This is 
no problem:  Our algorithm in fact works for any \emph{approximate} minimum that is an additive constant
$C$ away from the exact infimum every frame $k$ 
(for any arbitrarily large constant $C\geq0$).  This 
effectively changes the ``$B$''  constant in our performance bounds 
to a new constant ``$B+C$'' \cite{sno-text}. 
Let $ratio^{opt}$ represent the optimal ratio of $\overline{y}_0/\overline{T}$ for the problem
\eqref{eq:p0}-\eqref{eq:p2}.  As before, it can be shown that if the problem is feasible (so that there exists
an algorithm that can achieve the constraints
\eqref{eq:p1}-\eqref{eq:p2}), then any $C$-additive approximation of the above algorithm satisfies all 
desired constraints and yields $\overline{y}_0/\overline{T} \leq ratio^{opt} + O(1/V)$, which can be 
pushed arbitrarily close to $ratio^{opt}$ as $V$ is increased, with the same tradeoff in the queue sizes 
(and hence convergence times) with $V$.  The proof of this is similar to that of Theorem \ref{thm:perf1}, 
and is omitted for brevity (see \cite{sno-text}\cite{renewals-asilomar2010} for the full proof). 

\subsection{Random Task Arrivals and Flow Control} \label{section:random-task-arrivals}

\begin{figure}[htbp]
   \centering
   \includegraphics[height=1.7in,width=5in]{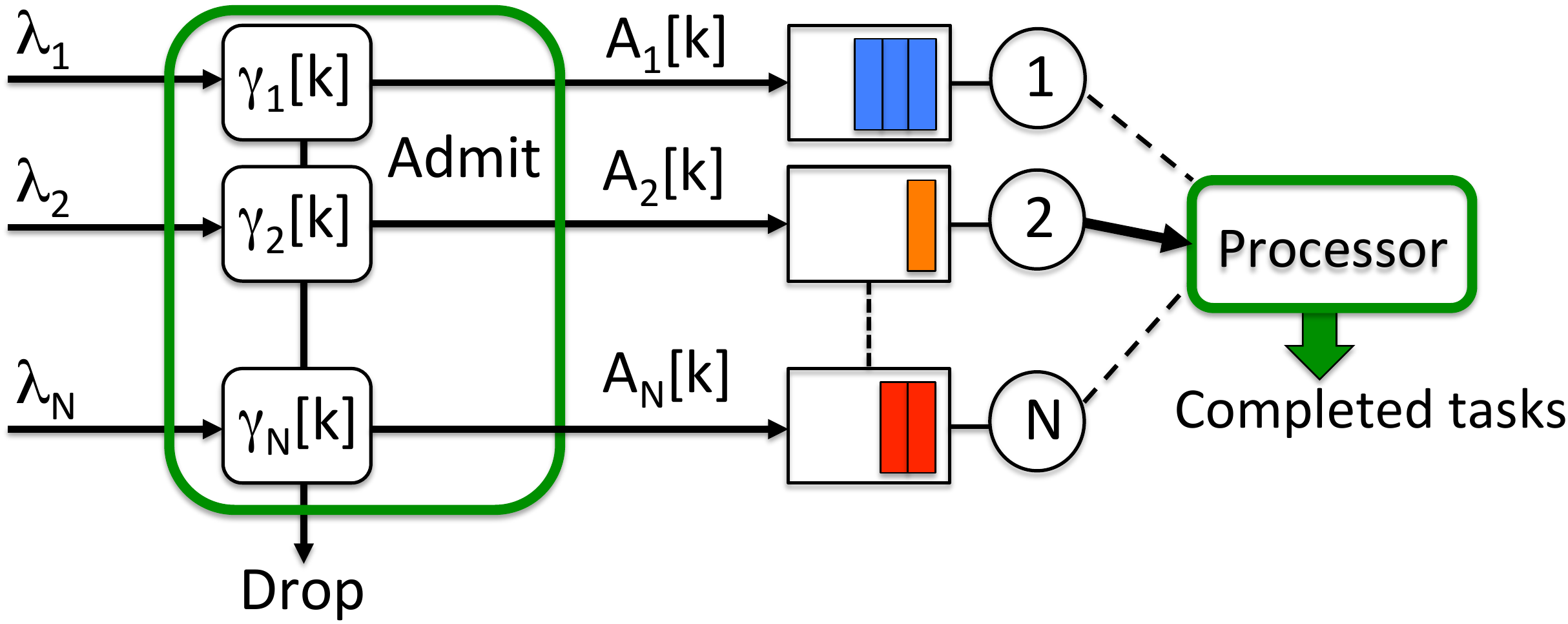} % requires the graphicx package
   \caption{A task processing system with random arrivals and flow control.}
   \label{fig:random-arrivals}
\end{figure}

Again consider a system with $N$ classes of tasks, as in Section \ref{section:task-scheduling}. Each 
frame $k$ again has a busy period of duration $D[k]$ and an idle period of duration $I[k]$ as in 
Fig. \ref{fig:timeline}.  However, rather than 
always having tasks available for processing, this subsection assumes tasks arrive \emph{randomly}
with rates $(\lambda_1, \ldots, \lambda_N)$, where $\lambda_n$ is the rate of task arrivals per unit time (see
Fig. \ref{fig:random-arrivals}). 
At the beginning of each busy period, the controller chooses a variable $c[k]$ that specifies which type of task is
performed. However, $c[k]$ can now take values in the set $\{0, 1, \ldots, N\}$, where $c[k] =0$ is a \emph{null} 
choice that selects no task on frame $k$.  If $c[k]=0$, the busy period has some positive size $D_{0}$ and 
may spend a small amount of 
energy to power the electronics, but does not process any task. 
The mode selection variable $m[k]$ takes values in the 
same set $\script{M}$ as before. 
The idle time variable $I[k]$ is again chosen in the interval $[0, I_{max}]$. 

Further, for each $n \in \{1, 2, \ldots, N\}$ 
we introduce \emph{flow control variables} $\gamma_n[k]$, chosen in the interval $0 \leq \gamma_n[k] \leq 1$. 
The variable $\gamma_n[k]$ represents the probability of admitting each new randomly arriving task of 
class $n$ on frame $k$ (see Fig. \ref{fig:random-arrivals}).  This enables the system to drop tasks if the raw arrival 
rates $(\lambda_1, \ldots, \lambda_N)$
cannot be supported.  Let $\bv{\gamma}[k] = (\gamma_1[k], \ldots, \gamma_N[k])$ be the vector of these
variables. 

We thus have $\alpha[k] = (c[k], m[k], I[k], \bv{\gamma}[k])$, with action space $\script{A}$ being the set of all $(c,m,I, \bv{\gamma})$
such that $c \in \{0, 1, \ldots, N\}$, $m \in \script{M}$, $0 \leq I \leq I_{max}$, and $0 \leq \gamma_n \leq 1$ for 
$n \in \{1, \ldots, N\}$. 
Define $e[k]$ and $D[k]$ as the energy and busy period duration for frame $k$.  Assume $e[k]$ 
depends only on $(c[k], m[k], I[k])$, and $D[k]$ depends only on $(c[k], m[k])$, with 
averages given by functions $\hat{e}(c[k], m[k], I[k])$ and 
$\hat{D}(c[k], m[k])$: 
\[ \hat{e}(c[k], m[k], I[k]) = \expect{e[k]|c[k], m[k], I[k]} \: \: \: , \: \: \: \hat{D}(c[k], m[k]) = \expect{D[k]|c[k], m[k]} \]
Finally, for each $n \in \{1, \dots, N\}$, 
define $A_n[k]$ as the random number of new arrivals admitted on frame $k$, which depends on the 
total frame size $D[k]+I[k]$ and the admission probability $\gamma_n[k]$.  
Formally, assume the arrival vector 
$(A_1[k], \ldots, A_N[k])$ is conditionally independent of the past
given the current $\alpha[k]$ used, with expectations: 
\begin{equation} \label{eq:ahat} 
\expect{A_n[k]| \alpha[k]} = \lambda_n\gamma_n[k][\hat{D}(c[k], m[k]) +I[k]] 
\end{equation} 
The assumption on independence of the past holds whenever arrivals are independent and Poisson, 
or when all frame sizes are an integer number of fixed size slots, and arrivals are independent and
identically distributed (i.i.d.) over slots with 
some general distribution.

We seek to maximize a weighted sum of admission rates subject to supporting all of the admitted
tasks, and to maintaining average power to within a given positive constant $P_{av}$: 
\begin{eqnarray}
\mbox{Maximize:} &&  \frac{\sum_{n=1}^Nw_n\overline{A}_n}{\overline{D} + \overline{I}} \label{eq:rand1} \\
\mbox{Subject to:} && \overline{A}_n/(\overline{D} + \overline{I}) \leq \overline{1}_n/(\overline{D} + \overline{I})  \: \: \: \forall n \in \{1, \ldots, N\} \label{eq:rand2} \\
&& \frac{\overline{e}}{\overline{D} + \overline{I}} \leq P_{av} \label{eq:rand3} \\
&& \alpha[k] \in \script{A} \: \: \: \forall k \in \{0, 1, 2, \ldots\} \label{eq:rand4} 
\end{eqnarray}
where $(w_1, \ldots, w_N)$ are a collection of positive weights that prioritize the different classes in the 
optimization objective. 

We thus have $L=N+1$ constraints.  To treat this problem, define $T[k] = D[k] + I[k]$, so that $\hat{T}(\alpha[k]) = \hat{D}(c[k],m[k]) + I[k]$.  Further define  $y_0[k]$, $y_n[k]$ for $n \in \{1, \ldots, N\}$, 
and $x[k]$ as:  
\begin{eqnarray*}
y_0[k] = -\sum_{n=1}^Nw_nA_n[k]  &\implies&  \hat{y}_0(\alpha[k]) = -\sum_{n=1}^Nw_n\lambda_n\gamma_n[k][\hat{D}(c[k], m[k]) + I[k]] \\
y_n[k] = A_n[k] - 1_n[k] &\implies&  \hat{y}_n(\alpha[k]) = \lambda_n\gamma_n[k][\hat{D}(c[k], m[k]) + I[k]] - 1_n[k]  \\
 x[k] = e[k] - [D[k]+I[k]]P_{av} &\implies& \hat{x}(\alpha[k]) = \hat{e}(c[k], m[k], I[k]) - [\hat{D}(c[k], m[k])+ I[k]]P_{av}
\end{eqnarray*}
Then: 
\begin{itemize} 
\item Minimizing $\overline{y}_0/\overline{T}$ is equivalent to \eqref{eq:rand1}.
\item The constraints $\overline{y}_n \leq 0$ for $n \in \{1, \ldots, N\}$ are equivalent to \eqref{eq:rand2}. 
\item The constraint $\overline{x} \leq 0$ is equivalent to \eqref{eq:rand3}. 
\end{itemize} 

Note that the above problem does not specify any explicit queueing for the randomly arriving 
tasks.  The algorithm will in fact construct explicit queues (so that the virtual queues can be viewed
as actual queues).  Note also that the 
constraint $\alpha[k] \in \script{A}$ does not allow restrictions on actions based on the queue
state, such as when the queue is empty or not.  Thus, in principle, we allow the possibility of 
``processing'' a task of class $n$ even when there is no such task available.  In this case, we
assume this processing is still costly, in that it 
incurs time equal to $\hat{D}(c[k], m[k])$ and energy equal to $\hat{e}(c[k],m[k],I[k])$.  Our
algorithm will naturally learn to avoid the inefficiencies associated with such actions. 

\subsubsection{The Dynamic Algorithm for Random Task Arrivals} 

To enforce the constraints $\overline{y}_n \leq 0$ for each $n \in \{1, \ldots, N\}$, define queue $Q_n[k]$
with update: 
\begin{equation} \label{eq:zrand-update} 
Q_n[k+1] = \max[Q_n[k] + A_n[k] - 1_n[k], 0]  
\end{equation} 
To enforce $\overline{x} \leq 0$, define a virtual queue $Z[k]$ with update: 
\begin{equation} \label{eq:h-update} 
Z[k+1] = \max[Z[k] + e[k] - [D[k] + I[k]]P_{av}, 0] 
\end{equation} 
It can be seen that the queue update \eqref{eq:zrand-update} is the same as that of an \emph{actual queue} 
for class $n$ tasks, with random task arrivals $A_n[k]$ and task service $1_n[k]$. 
The minimization of \eqref{eq:new-min} then becomes the following: Every frame $k$, 
observe queues $\bv{Q}[k]$ and $Z[k]$.  Then choose $c[k] \in \{0, 1, \ldots, N\}$, $m[k] \in \script{M}$, 
$I[k] \in [0, I_{max}]$, and $\gamma_n[k] \in [0,1]$ for all $n \in \{1, \ldots, N\}$ to minimize: 
\begin{eqnarray}
\frac{-V\sum_{n=1}^Nw_n\lambda_n\gamma_n[k][\hat{D}(c[k], m[k]) + I[k]]}{\hat{D}(c[k],m[k]) + I[k]} \nonumber \\
 + \frac{\sum_{n=1}^NQ_n[k](\lambda_n\gamma_n[k][\hat{D}(c[k],m[k]) + I[k]] - 1_n[k])}{\hat{D}(c[k],m[k]) + I[k]} \nonumber \\
+ \frac{Z[k](\hat{e}(c[k], m[k], I[k]) - [\hat{D}(c[k], m[k]) + I[k]]P_{av})}{\hat{D}(c[k], m[k]) + I[k]} \label{eq:huge} 
\end{eqnarray}

After a simplifying cancellation of terms, it is easy to see that the $\gamma_n[k]$ decisions can be
separated from all other decisions (see Exercise \ref{ex:separate-gamma}).  The resulting 
algorithm then observes queues $\bv{Q}[k]$ and $Z[k]$ every frame $k$ and performs the following: 
\begin{itemize} 
\item (Flow Control) For each $n \in \{1, \ldots, N\}$, choose $\gamma_n[k]$ as: 
\begin{equation} \label{eq:optimal-gamma} 
 \gamma_n[k] = \left\{ \begin{array}{ll}
                          1 &\mbox{ if $Q_n[k] \leq Vw_n$} \\
                             0  & \mbox{ otherwise} 
                            \end{array}
                                 \right. 
 \end{equation} 
\item (Task Scheduling) Choose $c[k] \in \{0, 1, \ldots, N\}$, $m[k] \in \script{M}$, $I[k] \in [0, I_{max}]$ to minimize: 
\begin{equation} \label{eq:other-dec} 
 \frac{Z[k]\hat{e}(c[k], m[k], I[k])  -\sum_{n=1}^NQ_n[k]1_n[k]}{\hat{D}(c[k], m[k]) + I[k]} 
 \end{equation} 
\item (Queue Update) Update $Q_n[k]$ for each $n \in \{1, \ldots, N\}$ by \eqref{eq:zrand-update}, 
and update $Z[k]$ by \eqref{eq:h-update}. 
\end{itemize} 

The minimization problem \eqref{eq:other-dec} is similar to  \eqref{eq:minimize}, 
and can be carried out in the same manner as discussed in Section \ref{section:dppr}. A key observation about the
above algorithm is that \emph{it does not require knowledge of the arrival rates 
$(\lambda_1, \ldots, \lambda_N)$}.  Indeed, the $\lambda_n$ terms cancel out of the minimization, 
so that the flow control variables $\gamma_n[k]$ in \eqref{eq:optimal-gamma} 
make ``bang-bang'' decisions that admit all newly arriving tasks of class $n$ on frame $k$
if $Q_n[k] \leq Vw_n$, and 
admit none otherwise.    This property makes the algorithm naturally adaptive to situations when
the arrival rates change, as shown in the simulations of Section \ref{section:simulation-random}. 

Note that if $\hat{e}(0,m,I) < \hat{e}(c,m,I)$ for all $c \in \{1, \ldots, N\}$, 
$m \in \script{M}$, $I\in[0, I_{max}]$, so that the energy associated with processing \emph{no task} 
is less than the energy of processing any class $c\neq0$, then the minimization in \eqref{eq:other-dec} 
will never select a class $c$ such that $Q_c[k]=0$.  That is, the algorithm naturally will   
never select a class for which no task is available. 

\subsubsection{Deterministic Queue Bounds and Constraint Violation Bounds} \label{section:det-bound} 

In addition to satisfying the desired constraints and achieving a weighted sum of admitted rates
that is within $O(1/V)$ of optimality, the flow control structure of the task scheduling algorithm admits
\emph{deterministic queue bounds}.  Specifically, assume all frame sizes are bounded by a constant $T_{max}$, 
and that the raw number of class $n$ arrivals per frame (before admission control) is at most $A_{n,max}$. 
By the flow control policy \eqref{eq:optimal-gamma}, new arrivals of class $n$ are only admitted
if $Q_n[k] \leq Vw_n$.  Thus, assuming that $Q_n[0] \leq Vw_n + A_{n,max}$, 
we must have $Q_n[k] \leq Vw_n + A_{n,max}$
for all frames $k \in \{0, 1, 2, \ldots\}$.  This specifies a worst-case queue backlog that is $O(V)$, which 
establishes an explicit $[O(1/V), O(V)]$ performance-backlog tradeoff that is superior to that given in 
\eqref{eq:g2}. 

With mild additional structure on the $\hat{e}(c,m,I)$ function, the deterministic bounds 
on queues $Q_n[k]$ lead to a deterministic bound on $Z[k]$, so that one can compute a value
$Z_{max}$, of size $O(V)$, such that $Z[k] \leq Z_{max}$ for all $k$. This is explored in Exercise \ref{ex:deterministic-queue} (see also \cite{neely-energy-it}).

\subsection{Simulations and Adaptiveness of Random Task Scheduling} \label{section:simulation-random}

Here we simulate the dynamic task scheduling and flow control algorithm \eqref{eq:optimal-gamma}-\eqref{eq:other-dec}, using the 10-class system model defined in Section \ref{section:sim1} with $\hat{e}(c,m)$ functions
given in \eqref{eq:ehatrand1}-\eqref{eq:ehatrand2}.  For consistency with that model, we remove
the $c[k]=0$ option, so that the decision \eqref{eq:ehatrand2}  
chooses $c[k]=1, m[k]=1$, incurring one unit of energy, in 
case no tasks are available.   Arrivals are from independent Bernoulli processes with rates $\lambda_i = \rho/(30i)$
for each class $i \in \{1, \ldots, 10\}$, with $\rho=0.8$. 
We use weights $w_n=1$ for all $n$, so that the objective is
to maximize total throughput, and $P_{av} = 0.5$, which we know is feasible
from results in Fig. \ref{fig:plot1} of 
Section \ref{section:sim1}.  Thus, we expect the algorithm to learn to admit everything, so that 
the admission rate approaches the total arrival rate as $V$ is increased.  We simulate for 10 million frames, using $V$
in the interval from $0$ to $200$.  Results are shown in Figs. 
\ref{fig:rand1} and  \ref{fig:rand2}.  Fig \ref{fig:rand1} shows the algorithm learns to admit everything for large $V$ (100 or above), 
and Fig. \ref{fig:rand2} plots the resulting average queue size (in number of tasks) per queue, together with the 
deterministic bound $Q_n[k] \leq V + 60$ (where $A_{n,max} = 60$ in this case because there is at most one arrival per slot, 
and the largest possible frame is $D_{max} + I_{max} = 50 + 10 = 60$).  The average power constraint was 
satisfied (with slackness) for all cases.  The average queue size in Fig. \ref{fig:rand2} grows linearly with $V$ until $V=100$, when it saturates
by admitting everything.  The saturation value is the average queue size associated with admitting the raw arrival rates directly. 

\begin{figure}[cht]
\centering
\begin{minipage}{2.6in}  
\includegraphics[height=2.1in, width=2.7in]{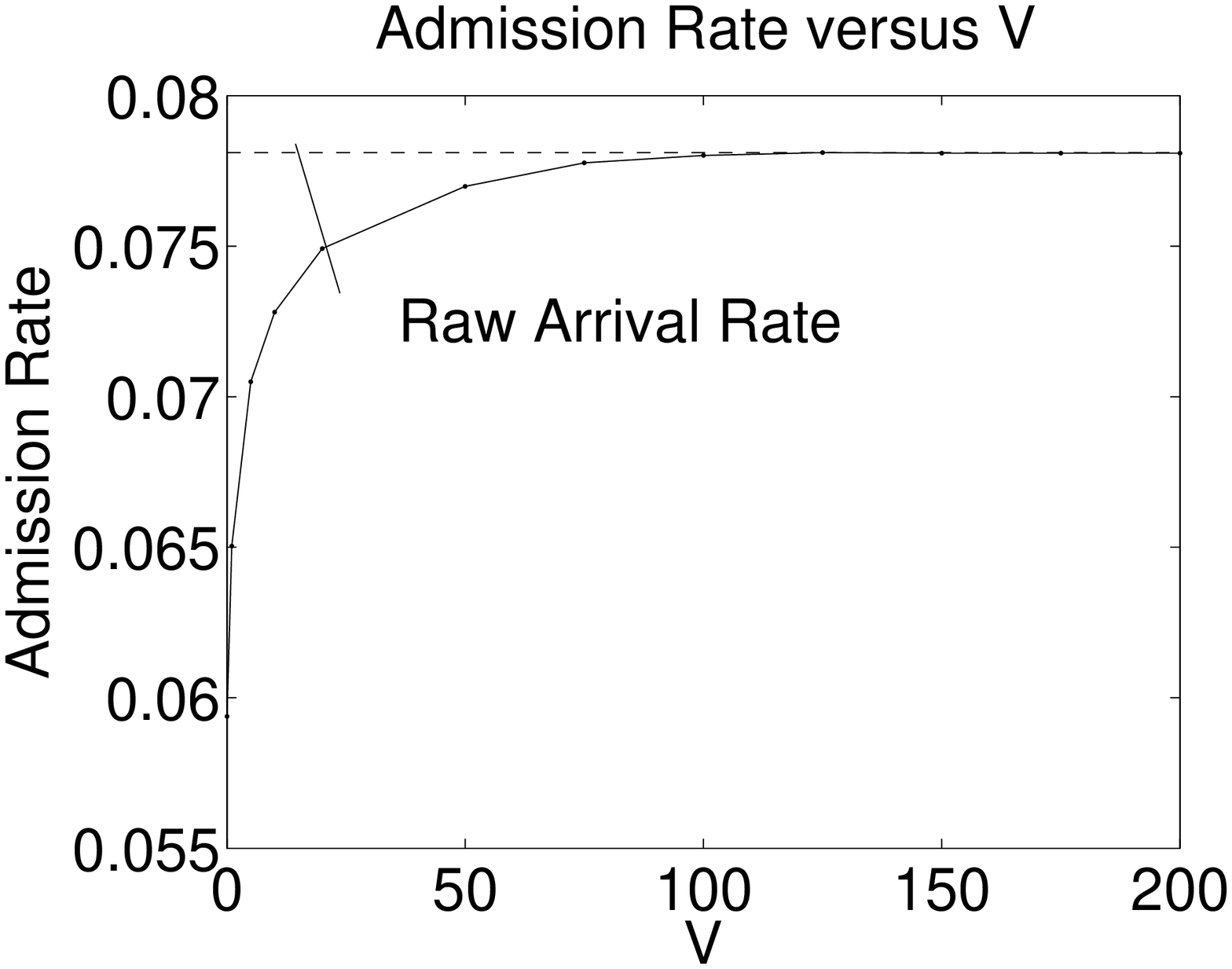}
\caption{Total admission rate versus $V$.}
\label{fig:rand1}
\end{minipage}
\qquad
\qquad
\begin{minipage}{2.6in}
  \includegraphics[height=2.1in, width=2.7in]{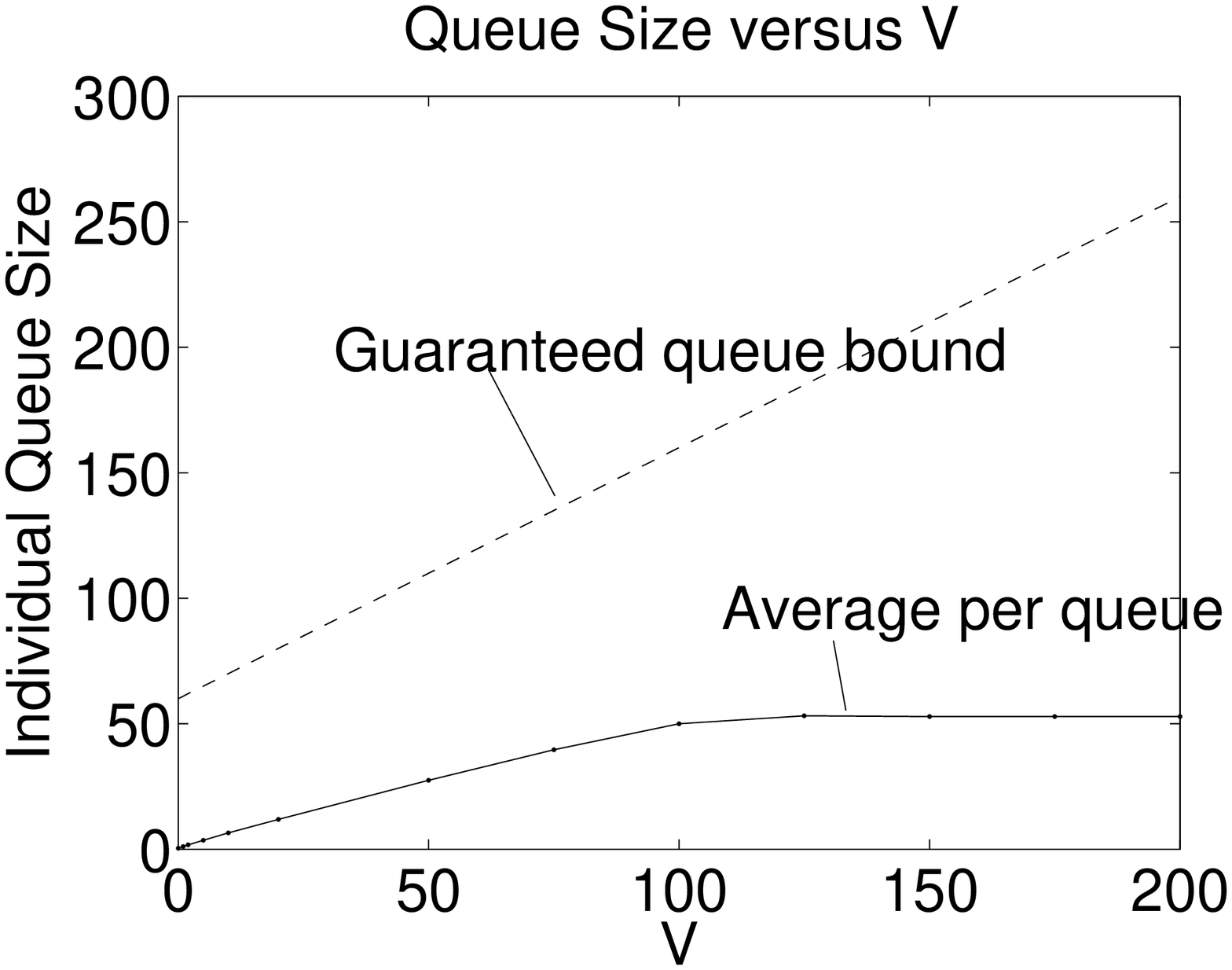} 
  \caption{Average queue size per queue, and the deterministic bound from Section \ref{section:det-bound}.}
  \label{fig:rand2}
\end{minipage}
\end{figure} 

We now illustrate that the algorithm is robust to abrupt rate changes.  We consider the same system
as before, run over 10 million frames with $V=100$. However, we break the simulation timeline into three
equal size phases.  During the first and third phase, 
we use arrival rates $\lambda_i = \rho_1/(30i)$ for $i \in \{1, \ldots, 10\}$, where $\rho_1 = 0.8$ as in the previous experiment. 
Then, during the second (middle) phase, we double the rates to $\lambda_i = \rho_2/(30i)$,
where $\rho_2 = 1.6$.   Because $\rho_2 > 1$, these rates are \emph{infeasible} and the algorithm must
learn to optimally drop tasks so as to maximize the admission rate subject to the power constraint.  Recall
that the algorithm is unaware of the arrival rates and must adapt to the existing system conditions. The 
results are shown in Figs. \ref{fig:rand-vary1} and \ref{fig:rand-vary2}.  Fig. \ref{fig:rand-vary1} shows
a moving average admission rate versus time.  During the first and third phases of the simulation, 
we have $\rho_1 = 0.8$ and the 
admitted rates are close to the raw arrival rate (shown as the lower dashed horizontal line). 
During the middle interval (with $\rho_2 = 1.6$),  the
algorithm quickly adapts to the increased arrivals and yields admitted rates that are close to those
that should be used in a system with loading $\rho_2 = 1.6$ always 
(shown as the higher dashed horizontal line). 
Fig. \ref{fig:rand-vary2} plots the corresponding moving average queue backlog per queue.  The lower 
dashed horizontal 
line indicates the value of average backlog that would be achieved in a system with loading $\rho_1 = 0.8$
always.  Also shown is the deterministic queue bound $V + 60 = 160$, which 
holds for all frames, regardless of the raw arrival rates. 

\begin{figure}[cht]
\centering
\begin{minipage}{2.6in}  
\includegraphics[height=2.1in, width=2.7in]{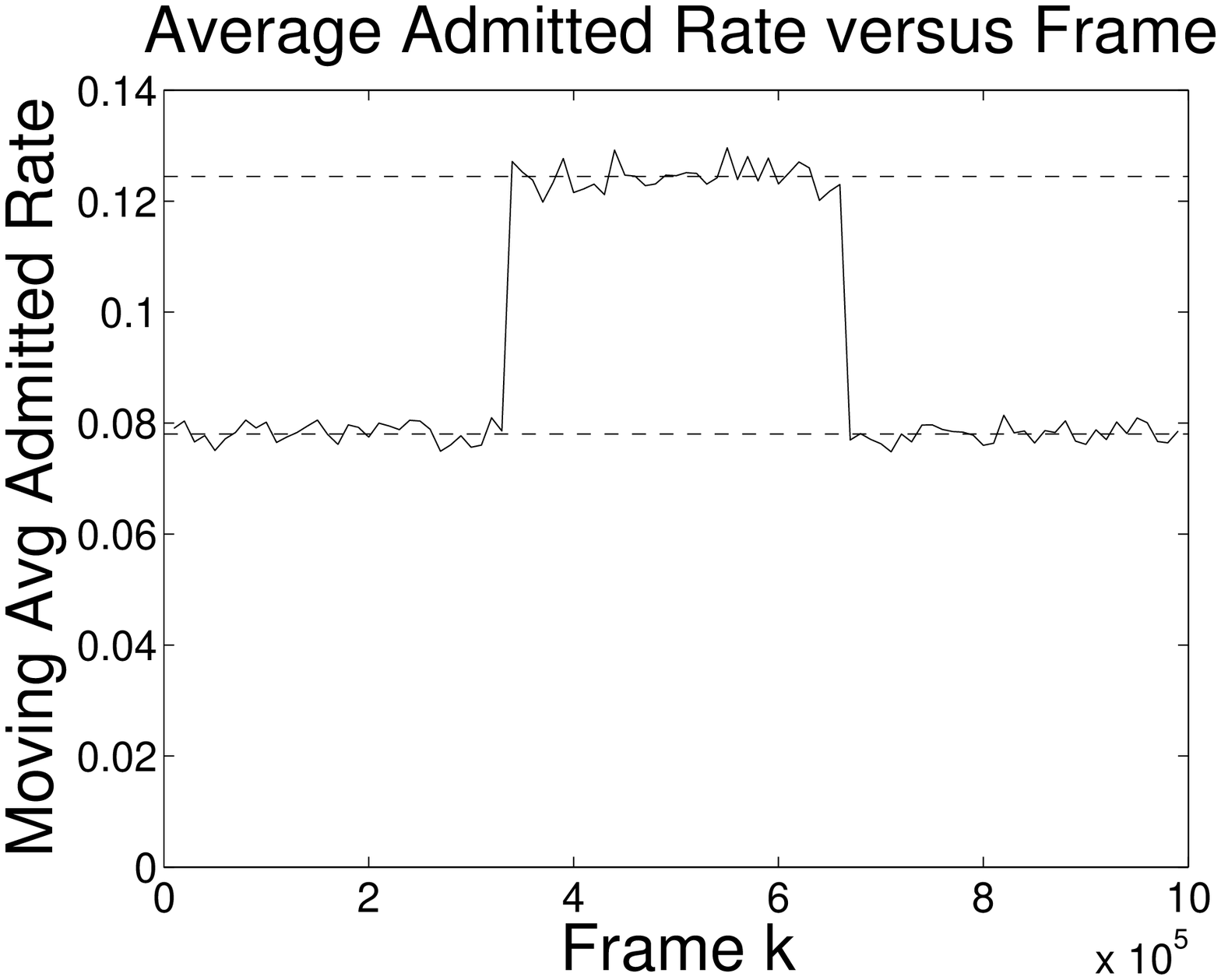}
\caption{Moving average admission rate versus frame index $k$ for the system with abrupt rate changes.}
\label{fig:rand-vary1}
\end{minipage}
\qquad
\qquad
\begin{minipage}{2.6in}
  \includegraphics[height=2.1in, width=2.7in]{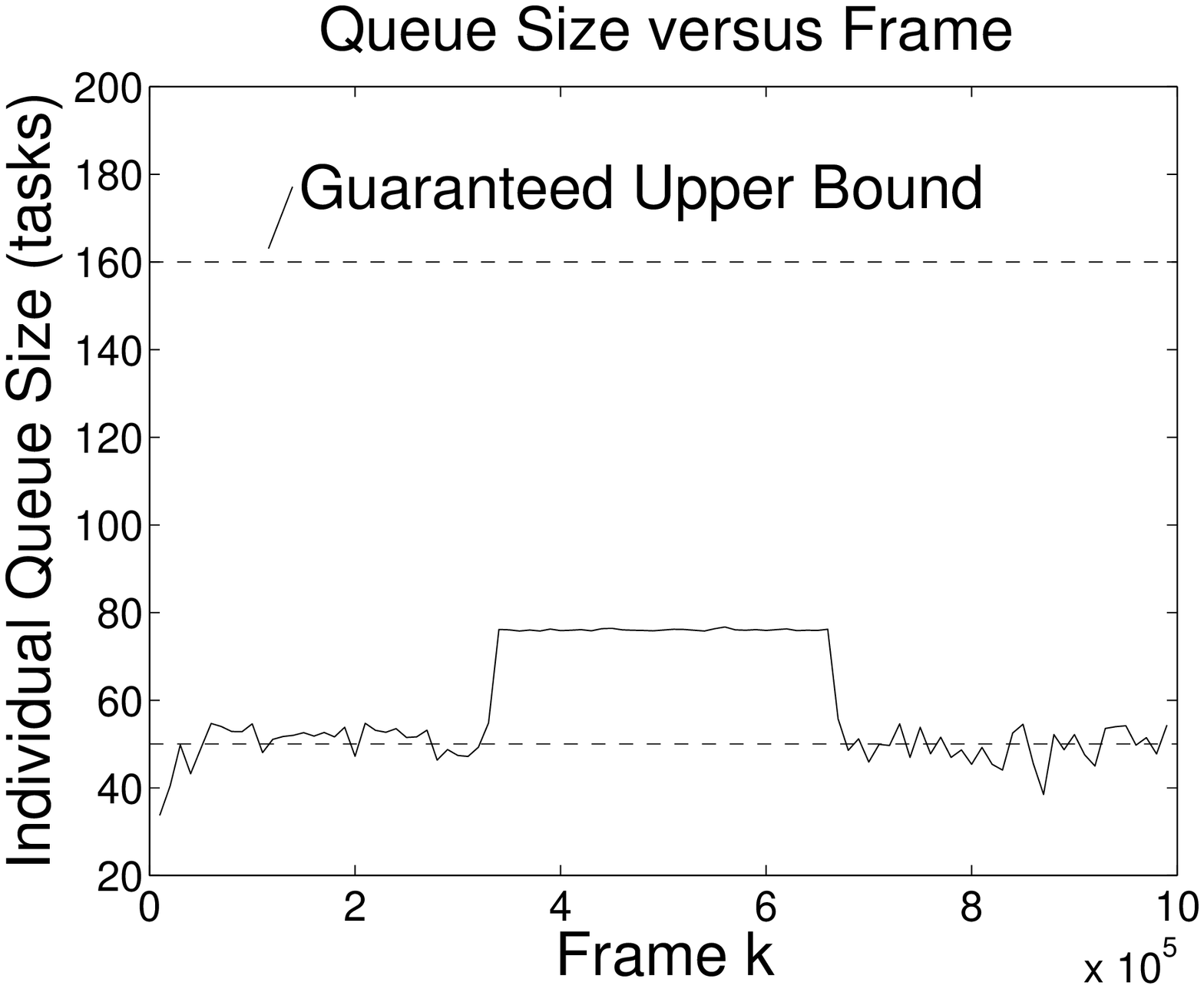} 
  \caption{Individual queue size (average and guaranteed bound) versus frame index $k$ for the system
  with abrupt rate changes.}
  \label{fig:rand-vary2}
\end{minipage}
\end{figure}

\subsection{Task Scheduling: Extensions and Further Reading} 

This section considered minimizing 
time averages subject to time average constraints.  
Extended techniques for minimizing convex functions of time average vectors 
(or maximizing concave functions) subject to similar constraints
are treated in the renewal optimization 
theory of \cite{sno-text}\cite{renewals-asilomar2010}. 
This is useful for extending the flow control problem \eqref{eq:rand1}-\eqref{eq:rand4} to optimize
a sum of concave utility functions $\phi_n(x)$ of the time average admission rates for each class:  
\[ \sum_{n=1}^N\phi_n\left(\overline{A}_n/(\overline{D} + \overline{I})\right) \]
Using logarithmic functions $\phi_n(x)$ leads to \emph{proportional fairness} \cite{kelly-charging},
while other fairness properties are achieved with other 
concave functions \cite{mo-walrand-fair}\cite{low-fairness-properties}\cite{low-non-convex}. 
Related utility optimization for particular classes of wireless systems with fixed size frames of $T$ slots, 
and with probabilistic reception every slot, is treated using a 
different technique in \cite{utility-frame-hou}\cite{frame-hou}. 
Related work on utility optimal scheduling for single-slot data networks is treated using 
convex optimization 
in \cite{kelly-shadowprice}\cite{low-duality}\cite{xiao-johansson-boyd-toc}\cite{lin-shroff-cdc04}, 
stochastic ``primal-dual'' algorithms in \cite{prop-fair-down}\cite{vijay-allerton02}\cite{stolyar-greedy}\cite{atilla-primal-dual-jsac}\cite{primal-dual-cmu}, 
and stochastic ``dual'' algorithms 
in \cite{neely-thesis}\cite{neely-fairness-infocom05}\cite{atilla-fairness}\cite{now}\cite{ribeiro-duality-it}.

For the problem with random arrivals, queueing delay can often be improved by changing
the constraint $\overline{A}_n \leq \overline{1}_n$ of \eqref{eq:rand2} 
to $\overline{A}_n + \epsilon \leq \overline{1}_n$, for some $\epsilon>0$.  This specifies that 
the output processing rate should be $\epsilon$ larger than the arrival rate.  However, unlike the case
$\epsilon=0$, such a constraint yields a queue $Q_n[k]$ that contains some ``fake'' tasks, and can
lead to decisions that serve the queue when no actual tasks are available.  This can be overcome
using the theory of \emph{$\epsilon$-persistent service queues} 
in \cite{neely-smartgrid}\cite{neely-worst-case-delay}.

Note that our flow control algorithm rewards admission of new data on each frame. This 
fits the general framework of this section, 
where attributes $y_l[k]$ are (possibly random) functions of the control 
action $\alpha[k] \in \script{A}$. One might attempt to solve the problem by 
defining a reward upon \emph{completion} of service. 
This does not fit our framework:  That is because the algorithm might ``complete'' a service in a queue that is empty,
simply to accumulate the ``fake'' reward.  One could eliminate fake rewards by an augmented model
that allows rewards $\hat{y}_l(\alpha[k], \bv{Q}[k])$ and/or action spaces $\script{A}(\bv{Q}[k])$ that depend 
on the current backlog, although this creates a much more complex Markov decision model that is easily avoided
by defining rewards at admission. 
However, there are some problems 
that require such a reward structure, such as problems of stock market trading where prior ownership of a 
particular stock is required to reap the benefits of selling at an observed desirable price.  These problems
can be treated with a modified Lyapunov function of the form $L[k] = \sum_{n=1}^N(Q_n[k]-\theta)^2$,
which pushes backlog towards a non-empty state $\theta$.  This approach is used for stock trading
\cite{neely-stock-cdc2010}, inventory control \cite{neely-inventory-control-cdc2010}, energy harvesting
\cite{longbo-energy-harvest-mobihoc}, and smart-grid energy management \cite{rahul-battery}. 
The first algorithms for optimal energy harvesting 
used a related technique \cite{leonidas-energy-harvest}, 
and related problems in processing networks that assemble components by combining different sub-components 
are treated
in \cite{jiang-walrand-book}\cite{longbo-processing-nets-IFIP}. 

\subsection{Exercises for Section \ref{section:general-attributes}}

 \begin{exercise} Consider a system with $N$ classes, each of which always has new tasks available.  
 Every frame $k$ we choose a class $c[k] \in \{1, \ldots, N\}$, and select a single
 task of that class for processing.  We can process using mode $m[k] \in \script{M}$.  The result yields
 frame size $\hat{T}(c[k], m[k])$, energy $\hat{e}(c[k], m[k])$, and \emph{processing quality} $\hat{q}(c[k],m[k])$. 
 Design an algorithm that selects $c[k]$ and $m[k]$ every frame $k$ to  maximize 
 $\overline{q}/\overline{T}$ subject
 to an average power constraint $\overline{e}/\overline{T} \leq P_{av}$ and to a processing rate constraint
 of at least $\lambda_n$ for each class $n$. 
 \end{exercise} 

\begin{exercise} \label{ex:separate-gamma} Show that minimization of \eqref{eq:huge} leads
to the separable flow control and task scheduling algorithm of \eqref{eq:optimal-gamma}-\eqref{eq:other-dec}. 
\end{exercise} 

\begin{exercise} \label{ex:deterministic-queue}  Consider the flow control and task scheduling 
algorithm \eqref{eq:optimal-gamma}-\eqref{eq:other-dec}, and recall that $Q_n[k] \leq Q_{max}$ for all $n \in \{1, \ldots, N\}$ and all frames $k$, where $Q_{max} \defequiv \max_{n\in\{1, \ldots, N\}} [Vw_n + A_{n,max}]$. 
Suppose there is a constant $e_{min}>0$ such 
that $\hat{e}(c,m,I) \geq e_{min}$ for all $c \neq 0$, and that $\hat{e}(0,m,I) = 0$ for all $m \in \script{M}$, $I\in[0, I_{max}]$. 

a) Show that the minimization of \eqref{eq:other-dec} chooses $c[k]=0$ whenever 
$Z[k] > Q_{max}/e_{min}$.  

b)  Suppose there is a constant $e_{max}$ such that $e[k] \leq e_{max}$ for all $k$.  Conclude
that $Z[k] \leq Z_{max} \defequiv Q_{max}/e_{min} + e_{max}$ for all frames $k$. 

c) Use the queueing equation \eqref{eq:h-update} to show that over any sequence of $K$ frames
$\{j, j+1, \ldots, j+K-1\}$ (starting at some frame $j$), the total energy usage satisfies:
\[ \mbox{$\sum_{k=j}^{j+K-1} e[k] \leq P_{av}\sum_{k=j}^{j+K-1}T[k]  + Z_{max}$} \]
where $T[k]$ is the size of frame $k$. Hint:  Make an argument similar to the proof of Lemma \ref{lem:virtual-queues}. 
\end{exercise} 

\begin{exercise} \label{ex:alternate-problem} 
Consider the same general attributes $y_l[k]$ of Section \ref{section:general-attributes}, with 
$\hat{y}_l(\alpha[k])$ for $\alpha[k] \in \script{A}$.  State the algorithm for solving the problem of 
minimizing $\overline{y}_0$ subject to $\overline{y}_l/\overline{T} \leq c_l$.  Hint:  Define appropriate
attributes $x_l[k]$ on a system with an ``effective'' frame size of 1 every frame, and minimize
$\overline{x}_0/1$ subject to $\overline{x}_l/1 \leq 0$ for $l \in \{1, \ldots, L\}$. 
\end{exercise} 

\begin{exercise} Modify the task scheduling algorithm of Section \ref{section:dppr} to allow the controller
to serve more than one task per frame.  Specifically, 
every frame $k$ the controller chooses
a \emph{service action} $s[k]$ from a set $\script{S}$ of possible actions.  Each service action 
$s \in \script{S}$ determines a \emph{clearance vector} $\bv{c}(s) = (c_1(s), \ldots, c_N(s))$, where
$c_i(s)$ is the number of tasks of type $i$ served if action $s$ is used on a given frame. 
It also incurs a delay
$\hat{D}(s)$ and energy $\hat{e}(s)$.  
%State the resulting algorithm. 
%b) Give a specific example of a particular set $\script{S}$ 
%and particular $\bv{c}(s)$, $\hat{D}(s)$, $\hat{e}(s)$ functions. 
%Restate the algorithm using this example. 
\end{exercise} 

\begin{exercise} \label{ex:linear-fractional} Consider the linear fractional problem of finding 
a vector $(x_1, \ldots, x_M)$ to solve: 
\begin{eqnarray*}
\mbox{Minimize:} & \frac{a_0 + \sum_{i=1}^Ma_ix_i}{b_0 + \sum_{i=1}^Mb_ix_i} \\
\mbox{Subject to:} & \sum_{i=1}^M c_{il}x_i \leq d_l \: \: \: \forall l \in \{1, \ldots, L\} \\
& 0 \leq x_i \leq 1 \: \: \: \forall i \in \{1, \ldots, M\} 
\end{eqnarray*}
%minimize the function 
%$(a_0 + \sum_{i=1}^M a_ix_i)/(b_0 + \sum_{i=1}^Mb_ix_i)$ subject to $\sum_{i=1}^M c_{il}x_i \leq d_l$ for $l \in \{1, \ldots, L\}$ and $0 %\leq x_i \leq 1$ for all $i \in \{1, \ldots, M\}$.  
Assume constants $a_i$, $b_i$, $c_{il}$, $d_l$ are given, 
that $b_0>0$, and that $b_i\geq 0$ for all $i \in \{1, \ldots, M\}$.   Treat this as a time average problem 
with action $\alpha[k] = (x_1[k], \ldots, x_M[k])$, action space $\script{A} = \{(x_1, \ldots, x_M) | 0 \leq x_i \leq 1 \: \: \forall i \in \{1, \ldots, M\}\}$, frame size $T[k] = b_0 + \sum_{i=1}^Mb_ix_i[k]$, 
and where we seek to minimize $(a_0 + \sum_{i=1}^Ma_i\overline{x}_i)/(b_0+\sum_{i=1}^Mb_i\overline{x}_i)$ subject to $\sum_{i=1}^M c_{il}\overline{x}_i \leq d_l$ for all $l \in\{1, \ldots, L\}$. 

a) State the drift-plus-penalty ratio algorithm
\eqref{eq:new-min} 
for this, and conclude that: 
\begin{eqnarray*}
 \lim_{K\rightarrow\infty} \frac{a_0 + \sum_{i=1}^Ma_i\overline{x}_i[K]}{b_0 + \sum_{i=1}^Mb_i\overline{x}_i[K]} \leq ratio^{opt} + B/V \: \: \: , \: \: \: 
 \lim_{K\rightarrow\infty} \sum_{i=1}^Mc_{il}\overline{x}_i[K] &\leq& d_l \: \: \: \forall l \in \{1, \ldots, L\} 
\end{eqnarray*}
for some finite constant $B$, 
where $ratio^{opt}$ is the optimal value of the objective function, and
$\overline{x}_i[K] \defequiv \frac{1}{K}\sum_{k=0}^{K-1}x_i[k]$. Thus, 
the limiting time average satisfies the constraints and is within $B/V$ from optimality.  
Your answer should solve for values $\phi_i[k]$ such that on frame $k$ we choose 
$(x_1[k], \ldots, x_M[k])$ over $\script{A}$ 
to minimize $\frac{\phi_0[k] + \sum_{i=1}^M\phi_i[k]x_i[k]}{b_0 + \sum_{i=1}^Mb_ix_i[k]}$.   Note: In Appendix B it is  shown that
this minimization is performed as follows:   Define $\script{I} \defequiv \{i \in \{1, \ldots, M\} | b_i = 0\}$ and $\script{J} \defequiv\{j \in \{1, \ldots, M\} | b_j > 0\}$.  For all $i \in \script{I}$, choose $x_i[k] = 0$ if $\phi_i[k] \geq 0$, and $x_i[k]=1$ if $\phi_i[k] < 0$.  Next, temporarily select $x_j[k]=0$ for all $j \in \script{J}$. Then rank order
the indices $j \in \script{J}$ from smallest to largest value of $\phi_j[k]/b_j$, and, using this order, greedily 
change $x_j[k]$ from $0$ to $1$ if it improves the solution.  We stop when we reach the first index in the rank ordering that
does not improve the solution.

b) Note that the case $b_0=1$ and $b_i=0$ for $i \neq 0$ is a linear program. Give an explicit decision
rule for each $x_i[k]$ in this case (the solution should be separable for each $i\in\{1, \ldots, M\}$). 
\end{exercise}

\section{Reacting to Randomly Observed Events} \label{section:random-events} 

Consider a problem with 
general attributes $y_0[k], y_1[k], \ldots, y_L[k]$ and frame size $T[k]$ for each frame $k$,  as in 
Section \ref{section:general-attributes}.  However, now assume the controller observes
a random event $\omega[k]$ at the beginning of each frame $k$, and this can influence
attributes and frame sizes.  The value of $\omega[k]$ can represent a vector of channel
states and/or prices observed for frame $k$.  Assume $\{\omega[k]\}_{k=0}^{\infty}$ is independent and identically
distributed (i.i.d.) over frames. The controller chooses an action $\alpha[k] \in \script{A}(\omega[k])$,
where the action space $\script{A}(\omega[k])$ possibly depends on the observed $\omega[k]$. 
Values of $(T[k], y_0[k], \ldots, y_L[k])$ are conditionally independent of the past given the 
current $\omega[k]$ and $\alpha[k]$, with mean values: 
\[ \hat{y}_l(\omega[k], \alpha[k]) = \expect{y_l[k]|\omega[k], \alpha[k]}  \: \: \: , \: \: \: \hat{T}(\omega[k], \alpha[k]) = 
\expect{T[k]|\omega[k], \alpha[k]} \]
%Second moments of $T[k]$ and $y_l[k]$ are again assumed to be bounded, 
%regardless of the policy $\alpha[k]$ used.  

The goal is to solve the following optimization problem: 
\begin{eqnarray}
\mbox{Minimize:} && \overline{y}_0/\overline{T} \label{eq:r0}  \\
\mbox{Subject to:} && \overline{y}_l/\overline{T} \leq c_l \: \: \forall l \in \{1, \ldots, L\} \label{eq:r1} \\
&& \alpha[k] \in \script{A}(\omega[k]) \: \: \forall k \in \{0, 1, 2, \ldots\} \label{eq:r2} 
\end{eqnarray}

As before,  the constraints $\overline{y}_l \leq c_l\overline{T}$ are satisfied via virtual queues $Q_l[k]$
for $l \in \{1, \ldots, L\}$: 
\begin{equation} \label{eq:qlast-update} 
Q_l[k+1] = \max[Q_l[k] + y_l[k] - c_lT[k], 0] 
\end{equation} 
The random $\omega[k]$ observations make this problem more complex that those considered in 
previous sections of this paper.   We present
two different algorithms from \cite{sno-text}\cite{renewals-asilomar2010}.
%,
%one that uses the drift-plus-penalty ratio, and a different one that is easier to implement. 

{\bf{Algorithm 1:}} Every frame $k$, observe $\bv{Q}[k]$ and $\omega[k]$ and 
choose $\alpha[k] \in \script{A}(\omega[k])$ to minimize the following ratio of expectations:  
\begin{eqnarray} 
\frac{\expect{V\hat{y}_0(\omega[k],\alpha[k]) + \sum_{l=1}^LQ_l[k]\hat{y}_l(\omega[k], \alpha[k]) |\bv{Q}[k]}}{\expect{\hat{T}(\omega[k],\alpha[k])|\bv{Q}[k]}} \label{eq:alg1} 
\end{eqnarray} 
Then update the virtual queues via \eqref{eq:qlast-update}. 

{\bf{Algorithm 2:}} Define $\theta[0]=0$, and define $\theta[k]$ for $k \in \{1, 2, 3, \ldots\}$ as a 
running ratio of averages over past frames: 
\begin{equation} \label{eq:theta-k} 
\theta[k] \defequiv \mbox{$\sum_{i=0}^{k-1}y_0[i]/\sum_{i=0}^{k-1}T[i]$} 
\end{equation} 
Then every frame $k$, observe $\bv{Q}[k]$, $\theta[k]$, and $\omega[k]$, and 
choose $\alpha[k] \in \script{A}(\omega[k])$ to minimize the following function: 
\begin{equation} \label{eq:alg2} 
 V[\hat{y}_0(\omega[k], \alpha[k]) - \theta[k]\hat{T}(\omega[k],\alpha[k])] + \sum_{l=1}^LQ_l[k][\hat{y}_l(\omega[k], \alpha[k]) - c_l\hat{T}(\omega[k],\alpha[k])]  
 \end{equation} 
Then update the virtual queues via \eqref{eq:qlast-update}. 

\subsection{Comparison of Algorithms 1 and 2} 

Both algorithms are introduced and analyzed in \cite{sno-text}\cite{renewals-asilomar2010}, where
they are shown to satisfy the desired constraints and yield an optimality gap of $O(1/V)$. 
Algorithm 1 can be analyzed in a manner similar to the proof of Theorem \ref{thm:perf1}, and has 
the same tradeoff with $V$ as given in that theorem.  However,  the ratio of expectations \eqref{eq:alg1} 
is \emph{not necessarily} minimized by observing $\omega[k]$ and choosing $\alpha[k]\in\script{A}(\omega[k])$
to minimize the deterministic ratio given $\omega[k]$.   In fact, the minimizing policy 
depends on the (typically unknown) probability distribution for $\omega[k]$.  A more complex \emph{bisection algorithm}
is needed for implementation, as specified in \cite{sno-text}\cite{renewals-asilomar2010}.  

Algorithm 2 is much simpler and involves a greedy selection 
of $\alpha[k] \in \script{A}(\omega[k])$ based only on observation of $\omega[k]$, without requiring knowledge
of the probability distribution for $\omega[k]$. However, its mathematical analysis does not yield
as explicit information regarding convergence time as does Algorithm 1.  Further, it requires a running
average to be kept starting at frame $0$, and hence may not be as adaptive when system statistics change. 
A more adaptive approximation of Algorithm 2 would define the average $\theta[k]$ over a moving window
of some fixed number of frames, or would use an exponentially decaying average. 

Both algorithms reduce to the following simplified \emph{drift-plus-penalty} rule 
in the special case when
the frame size $\hat{T}(\omega[k],\alpha[k])$ is a fixed constant $T$ for all $\omega[k], \alpha[k]$:   
Every frame $k$, observe $\omega[k]$
and $\bv{Q}[k]$ and choose $\alpha[k] \in \script{A}(\omega[k])$ to minimize: 
\begin{equation} \label{eq:fixed-frame} 
 \mbox{$V\hat{y}_0(\omega[k],\alpha[k]) + \sum_{l=1}^LQ_l[k]\hat{y}_l(\omega[k],\alpha[k])$} 
 \end{equation} 
Then update the virtual queues via \eqref{eq:qlast-update}. This special case algorithm was
developed in \cite{neely-energy-it} to treat systems with fixed size time slots. 

A simulation comparison of the algorithms is given in \cite{renewals-asilomar2010}. The next
subsection describes an application to energy-aware computation
and transmission in a wireless smart phone.  Exercise \ref{ex:opportunistic-scheduling}
considers opportunistic scheduling where wireless transmissions can be deferred by waiting for more
desirable channels. Exercise \ref{ex:energy-price} considers an example of 
price-aware energy consumption for a network server that can process computational tasks or 
outsource them to another server. 

\subsection{Efficient Computation and Transmission for a Wireless Smart Device} \label{section:smartphone} 

Consider a wireless smart device (such as a smart phone or sensor) that always has tasks to process. 
Each task involves a computation operation, followed by a transmission operation over a wireless
channel. 
On each frame $k$, the device takes a new task and looks at its \emph{meta-data} $\beta[k]$, being information
that characterizes the task in terms of its computational and transmission requirements.  Let $d$ represent
the time required to observe this meta-data. The device then chooses a \emph{computational processing
mode} $m[k] \in\script{M}(\beta[k])$, where $\script{M}(\beta[k])$ is the set of all mode options under 
$\beta[k]$.  The mode $m[k]$ and meta-data $\beta[k]$ affect a computation time $D_{comp}[k]$, 
computation energy $e_{comp}[k]$, computation quality $q[k]$, 
and generate $A[k]$ bits for transmission over the channel.
The expectations of $D_{comp}[k]$, $e_{comp}[k]$, $q[k]$ are: 
\begin{eqnarray*}
\hat{D}_{comp}(\beta[k], m[k]) &=& \expect{D_{comp}[k] | \beta[k], m[k]} \\
 \hat{e}_{comp}(\beta[k],m[k]) &=& \expect{e_{comp}[k]|\beta[k], m[k]} \\
 \hat{q}(\beta[k], m[k]) &=& \expect{q[k]|\beta[k],m[k]} 
  \end{eqnarray*}
  For example, in a wireless sensor, the mode $m[k]$ may represent a particular sensing task, where
  different tasks can have different qualities and thus incur 
  different energies, times, and $A[k]$ bits for transmission.  The full conditional distribution of $A[k]$, 
  given $\beta[k]$, $m[k]$, will play a role in the transmission stage (rather than just its conditional expectation). 

The $A[k]$ units of data must be transmitted over a wireless channel.  Let $S[k]$ be the state of the 
channel on frame $k$, and assume $S[k]$ is constant for the duration of the frame.  We choose
a \emph{transmission mode} $g[k] \in \script{G}$, yielding a transmission time $D_{tran}[k]$
and transmission energy $e_{tran}[k]$ with expectations that depend on $S[k]$, $g[k]$, and $A[k]$.
Define random event $\omega[k] = (\beta[k], S[k])$ and action $\alpha[k] = (m[k], g[k])$.  
We can then define expectation functions $\hat{D}_{tran}(\omega[k], \alpha[k])$ and 
$\hat{e}_{tran}(\omega[k], \alpha[k])$ by: 
\[ \hat{D}_{tran}(\omega[k], \alpha[k]) = \expect{D_{tran}[k]|\omega[k], \alpha[k]} \: \: \: , \: \: \: \hat{e}_{tran}(\omega[k], \alpha[k]) = \expect{e_{tran}[k]|\omega[k], \alpha[k]} \]
where the above expectations are defined via the  
\emph{conditional distribution} associated with the number of bits $A[k]$
at the computation output,  given the 
meta-data $\beta[k]$ and computation mode  $m[k]$ selected in the computation phase (where
$\beta[k]$  and $m[k]$ are included in the $\omega[k]$, $\alpha[k]$ information). 
The total frame size is thus $T[k] = d + D_{comp}[k] + D_{tran}[k]$.

The goal is to maximize  frame processing quality per unit time $\overline{q}/\overline{T}$ subject to 
a processing rate constraint of $1/\overline{T} \geq \lambda$, 
and subject to an average power constraint 
$(\overline{e}_{comp} + \overline{e}_{tran})/\overline{T} \leq P_{av}$ (where $\lambda$ and $P_{av}$ are given 
constants). 
This fits the general framework with observed random events $\omega[k] = (\beta[k], S[k])$, control actions 
$\alpha[k] = (m[k], g[k])$, and action space $\script{A}(\omega[k]) = \script{M}(\beta[k]) \times \script{G}$. 
We can define $y_0[k] = -q[k]$, $y_1[k] = T[k] - 1/\lambda$,  and $y_2[k] = e_{comp}[k] + e_{tran}[k] - P_{av}T[k]$,
and solve the problem of minimizing $\overline{y}_0/\overline{T}$ subject to $\overline{y}_1 \leq 0$
and $\overline{y}_2 \leq 0$. 
To do so, let $Q[k]$ and $Z[k]$ be virtual queues for the two constraints: 
\begin{eqnarray}
Q[k+1] &=& \max[Q[k] + T[k] - 1/\lambda, 0] \label{eq:smart-q-update} \\
Z[k+1] &=& \max[Z[k] + e_{comp}[k] + e_{tran}[k] - P_{av}T[k], 0] \label{eq:smart-z-update} 
\end{eqnarray}
Using Algorithm 2, we define $\theta[0]=0$ and $\theta[k]$ for $k \in \{1, 2,\ldots\}$ by \eqref{eq:theta-k}.
The Algorithm 2 minimization \eqref{eq:alg2} amounts to observing $(\beta[k], S[k])$, $Q[k]$, $Z[k]$, and 
$\theta[k]$ on each frame $k$  and choosing
$m[k]\in\script{M}(\beta[k])$ and $g[k] \in \script{G}$ to minimize:  
\begin{eqnarray*} 
V[-\hat{q}(\beta[k],m[k]) - \theta[k](d + \hat{D}_{comp}(\beta[k],m[k]) + \hat{D}_{tran}(\omega[k],\alpha[k]))] \\
+  Q[k][d + \hat{D}_{comp}(\beta[k],m[k]) + \hat{D}_{tran}(\omega[k], \alpha[k]) - 1/\lambda] \\
+ Z[k][\hat{e}_{comp}(\beta[k],m[k]) + \hat{e}_{tran}(\omega[k],\alpha[k]) - P_{av}(d + \hat{D}_{comp}(\beta[k],m[k]) + 
\hat{D}_{tran}(\omega[k], \alpha[k]))]
\end{eqnarray*}
The computation and transmission operations are coupled and cannot be separated. 
This yields the following algorithm: Every frame $k$: 
\begin{itemize} 
\item Observe $\omega[k] = (\beta[k], S[k])$ and values $Q[k]$, $Z[k]$, $\theta[k]$.
Then \emph{jointly} choose  action $m[k] \in \script{M}(\beta[k])$ and $g[k] \in \script{G}$, for a combined
action $\alpha[k] = (m[k], g[k])$,  to minimize: 
\begin{eqnarray*}
-V\hat{q}(\beta[k], m[k]) + \hat{D}_{comp}(\beta[k], m[k])[Q[k] -V\theta[k] -P_{av}Z[k]] + Z[k]\hat{e}_{comp}(\beta[k],m[k]) \\
+   \hat{D}_{tran}(\omega[k], \alpha[k])[Q[k] -V\theta[k]  - P_{av}Z[k]] + Z[k]\hat{e}_{tran}(\omega[k],\alpha[k]) 
\end{eqnarray*}

\item (Updates) Update $Q[k]$, $Z[k]$, $\theta[k]$ via \eqref{eq:smart-q-update}, \eqref{eq:smart-z-update}, 
and \eqref{eq:theta-k}. 
\end{itemize} 

Exercise \ref{ex:opt-q} shows the algorithm can be implemented without $\theta[k]$ if
the goal is changed to maximize $\overline{q}$, rather than $\overline{q}/\overline{T}$. 
Further, the computation and transmission decisions can be \emph{separated} if the system is modified so that 
the bits generated from computation are handed to a separate transmission layer 
for eventual transmission over the channel, rather than requiring transmission on the same frame, 
similar to the structure used in Exercise \ref{ex:energy-price}.

\subsection{Exercises for Section \ref{section:random-events}} 

\begin{exercise} \label{ex:opportunistic-scheduling} (Energy-Efficient 
Opportunistic Scheduling \cite{neely-energy-it}) 
Consider a wireless device that operates over fixed size time slots $k \in \{0, 1, 2, \ldots\}$. 
Every slot $k$, new data of size $A[k]$ bits arrives and is added to a queue.   
The data must eventually be transmitted over a time-varying channel.  
At the beginning of every slot $k$, a controller observes the \emph{channel state} $S[k]$
and allocates power $p[k] $ for transmission, enabling transmission of $\mu[k]$ bits, 
where  $\mu[k] = \hat{\mu}(S[k], p[k])$ for some given function $\hat{\mu}(S,p)$. 
Assume $p[k]$ is chosen so that $0 \leq p[k] \leq P_{max}$ for some 
constant $P_{max}>0$. We want to minimize average power $\overline{p}$ subject to 
supporting all data, so that $\overline{A} \leq \overline{\mu}$. 
Treat this as a problem with all frames equal to 1 slot, observed
random events $\omega[k] = (A[k], S[k])$, and actions $\alpha[k] = p[k] \in [0, P_{max}]$.  Design an appropriate
queue update and power allocation algorithm, using the policy structure \eqref{eq:fixed-frame}. 
\end{exercise} 

\begin{figure}[htbp]
   \centering
   \includegraphics[height=1.5in, width=5.5in]{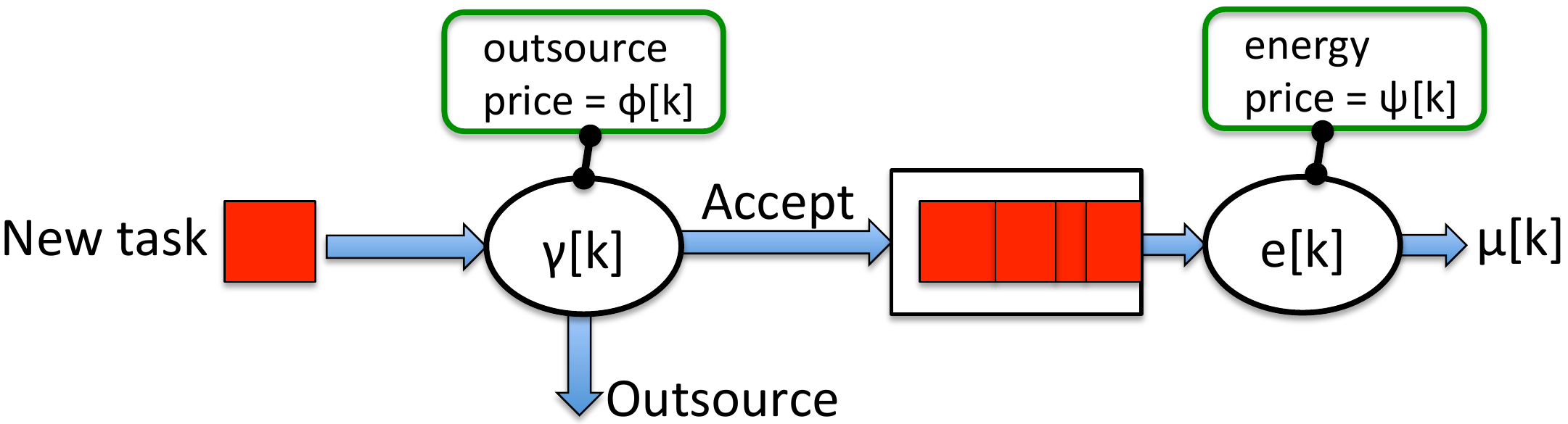} % requires the graphicx package
   \caption{The client/server system of Exercise \ref{ex:energy-price}, with decision variables $(\gamma[k], e[k])$ and 
   observed prices $(\phi[k], \psi[k])$.}
   \label{fig:energy-price}
\end{figure}

\begin{exercise} \label{ex:energy-price} (Energy Prices and Network Outsourcing) 
Consider a computer server that operates over fixed length time slots $k \in \{0, 1, 2,\ldots\}$. 
Every slot $k$, a new task arrives and has size $S[k]$ (if no task 
arrives then $S[k]=0$).  
The server decides to either accept the task, or outsource it to another server (see Fig. \ref{fig:energy-price}).  
Let $\gamma[k]$ be a binary decision variable that is 1 if the server accepts the task on slot $k$,
and zero else. Define $A[k] = \gamma[k]S[k]$ as the total workload admitted on slot $k$, which 
is added to the queue of work to be done. 
Let $\phi[k]$ be the (possibly time-varying) cost
per unit size for outsourcing, so the outsourcing cost is $c_{out}[k] = \phi[k](1-\gamma[k])S[k]$. 
Every slot $k$, the server additionally 
decides to process some of its backlog by purchasing an amount of energy $e[k]$ at a per-unit 
energy price $\psi[k]$, serving $\mu[k] = \hat{\mu}(e[k])$ units of backlog with cost $c_{energy}[k] = \psi[k]e[k]$, where $\hat{\mu}(e)$ is 
some given function. 
Assume $e[k]$ is chosen in some interval $0 \leq e[k] \leq e_{max}$. 
The goal is to minimize time average cost $\overline{c}_{out} + \overline{c}_{energy}$ subject to 
supporting all tasks, so that $\overline{A} \leq \overline{\mu}$.  Treat this as a problem with all frames
equal to 1 slot, observed random events $\omega[k] = (S[k], \phi[k], \psi[k])$, 
actions $\alpha[k] = (\gamma[k], e[k])$, and action space $\script{A}(\omega[k])$ being the
set of all $(\gamma, e)$ such that $\gamma \in \{0,1\}$ and $0 \leq e \leq e_{max}$. Design an appropriate
queue and state the dynamic
algorithm \eqref{eq:fixed-frame} 
for this problem. Show that it can be implemented without knowledge of the probability distribution 
for $(S[k], \phi[k], \psi[k])$, and that the $\gamma[k]$ and $e[k]$ decisions are separable. 
\end{exercise} 

\begin{exercise} \label{ex:opt-q} 
Consider the same problem of Section \ref{section:smartphone}, with the modified objective of 
maximizing $\overline{q}$ subject to $1/\overline{T} \geq \lambda$ and 
$(\overline{e}_{comp} + \overline{e}_{tran})/\overline{T} \leq P_{av}$. Use the observation  
from Exercise \ref{ex:alternate-problem} that this can be viewed as a problem with an ``effective''
fixed frame size equal to 1, and give an algorithm from the policy structure \eqref{eq:fixed-frame}. 
\end{exercise} 

\section{Conclusions} 

This paper presents a methodology for optimizing time averages in systems with variable length
frames.  Applications include energy and quality aware task scheduling in smart
phones, cost effective energy management at computer servers, 
and more.  The resulting algorithms are dynamic and often do not require knowledge of the probabilities
that affect system events.    While the performance theorem of this paper was stated under simple 
i.i.d. assumptions, 
the same algorithms are often provably robust to non-i.i.d. situations, including
situations where the events are non-ergodic \cite{sno-text}. 
Simulations in Section \ref{section:simulation-random} show examples of how such algorithms
adapt to changes in the event probabilities.  Exercises in this paper were included to help readers 
learn to design dynamic algorithms for their own optimization problems.

The solution technique of this paper uses the theory of optimization for
renewal systems from \cite{sno-text}\cite{renewals-asilomar2010}, which  
applies to general problems. 
Performance for individual problems can often be improved using enhanced techniques, 
such as using place-holder backlog, exponential Lyapunov functions, LIFO scheduling, and 
$\epsilon$-persistent
service queues, as discussed in \cite{sno-text} and references therein.  However, 
the drift-plus-penalty methodology described in this paper provides much of the insight
needed for these more advanced 
techniques. It is also simple to work with and typically yields desirable solution 
quality and convergence time.

%%%%%%%%%%%%%%%%%%%%%%%%%%%%%%
%%%%%%%%%%%%%%%%%%%%%%%%%%%%%%%%

\section*{Appendix A --- Bounded Moment Convergence Theorem} 

This appendix provides a \emph{bounded moment convergence theorem} that is often 
more convenient than the standard Lebesgue dominated convergence theorem (see, for example, 
\cite{williams-martingale} for the standard Lebesgue dominated convergence theorem).  
We are unaware of a statement and proof in the literature, so
we give one here for completeness.   Let $X(t)$ be a random process defined either over
non-negative real numbers $t \geq 0$, or discrete time $t \in \{0, 1, 2, \ldots\}$.  Recall that 
$X(t)$ converges \emph{in probability} to a constant $x$ if for all $\epsilon>0$ we have: 
\[ \lim_{t\rightarrow\infty} Pr[|X(t)-x|>\epsilon] = 0 \]

\begin{thm} \label{thm:bmc} Suppose there is a real number $x$ such that $X(t)$ converges
to $x$ in probability. 
Further suppose there are finite constants $C>0$, $\delta>0$ such that for all $t$ we have
$\expect{|X(t)|^{1+\delta}} \leq C$. 
Then $\lim_{t\rightarrow\infty} \expect{X(t)} = x$. 
\end{thm}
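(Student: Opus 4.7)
The plan is to use the bound $|\expect{X(t)} - x| \leq \expect{|X(t) - x|}$ and then control the right-hand side by splitting into the event that $X(t)$ is close to $x$ and the event that it is far. The $(1{+}\delta)$-moment bound will be used, via H\"older's inequality, to show that the ``far'' part has vanishing contribution because convergence in probability makes the probability of that event go to zero. This is essentially a uniform integrability argument, with the moment bound playing the role of the uniform integrability hypothesis.

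First I would fix an arbitrary $\epsilon > 0$ and write
\[
\expect{|X(t) - x|} = \expect{|X(t)-x|\,\mathbf{1}_{\{|X(t)-x|\leq \epsilon\}}} + \expect{|X(t)-x|\,\mathbf{1}_{\{|X(t)-x|>\epsilon\}}}.
\]
The first term is at most $\epsilon$ deterministically. For the second term, I would apply H\"older's inequality with conjugate exponents $p = 1+\delta$ and $q = (1+\delta)/\delta$ to get
\[
\expect{|X(t)-x|\,\mathbf{1}_{\{|X(t)-x|>\epsilon\}}} \leq \expect{|X(t)-x|^{1+\delta}}^{1/(1+\delta)} \Pr[|X(t)-x|>\epsilon]^{\delta/(1+\delta)}.
\]
To bound the first factor uniformly in $t$, I would use the elementary convexity inequality $(a+b)^{1+\delta} \leq 2^{\delta}(a^{1+\delta}+b^{1+\delta})$ applied to $|X(t) - x| \leq |X(t)|+|x|$, giving
\[
\expect{|X(t)-x|^{1+\delta}} \leq 2^{\delta}\bigl(C + |x|^{1+\delta}\bigr),
\]
which is a finite constant independent of $t$.

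Finally, I would invoke convergence in probability: $\Pr[|X(t)-x|>\epsilon] \to 0$ as $t\to\infty$, and since this probability is raised to the strictly positive power $\delta/(1+\delta)$, the second term in the split tends to zero. Therefore $\limsup_{t\to\infty} |\expect{X(t)}-x| \leq \epsilon$, and since $\epsilon > 0$ was arbitrary, the conclusion $\lim_{t\to\infty}\expect{X(t)} = x$ follows.

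The main obstacle is conceptual rather than computational: one needs to recognize that a uniform $(1{+}\delta)$-moment bound yields uniform integrability of the family $\{X(t)\}$, which is exactly the hypothesis missing from plain convergence in probability in order to upgrade it to convergence of expectations. The H\"older/convexity step is the technical heart; once it is in place, the split over $\{|X(t)-x|\leq\epsilon\}$ versus its complement is routine, and no appeal to Lebesgue's dominated convergence theorem or any measure-theoretic machinery beyond H\"older is required.
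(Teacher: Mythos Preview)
Your proof is correct and follows essentially the same route as the paper: split $\expect{|X(t)-x|}$ at level~$\epsilon$, bound the ``far'' part using the $(1{+}\delta)$-moment hypothesis, and let convergence in probability kill it. The only cosmetic differences are that the paper first reduces to $x=0$ (so the moment bound applies directly without your convexity step) and derives the key inequality via Jensen on the conditional expectation $\expect{|X(t)|^{1+\delta}\mid |X(t)|>\epsilon}\geq \expect{|X(t)|\mid |X(t)|>\epsilon}^{1+\delta}$ rather than invoking H\"older; unwinding that Jensen step gives exactly your H\"older bound $\expect{|X(t)|\,\mathbf{1}_{\{|X(t)|>\epsilon\}}}\leq C^{1/(1+\delta)}\Pr[|X(t)|>\epsilon]^{\delta/(1+\delta)}$.
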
  

\begin{proof} 
Without loss of generality, assume $x=0$ (else, we can define $Y(t) = X(t) - x$). Fix $\epsilon>0$. 
By definition of $X(t)$ converging to $0$ in probability, we have: 
\begin{equation} \label{eq:in-prob} 
 \lim_{t\rightarrow\infty} Pr[|X(t)|>\epsilon] = 0 
 \end{equation} 
Further, for all $t$ we have $\expect{X(t)} \leq \expect{|X(t)|}$, and so: 
\begin{equation} \label{eq:bound1} 
\expect{X(t)} \leq \epsilon + \expect{|X(t)| \left|\right. |X(t)|>\epsilon}Pr[|X(t)|>\epsilon] 
\end{equation} 
We want to show the final term in the right-hand-side above converges to $0$ when $t\rightarrow\infty$. 
To this end, note that for all $t$ we have: 
\begin{eqnarray}
C &\geq& \expect{|X(t)|^{1+\delta}} \nonumber \\
&\geq& \expect{|X(t)|^{1+\delta} \left|\right. |X(t)|>\epsilon}Pr[|X(t)|>\epsilon] \nonumber \\
&\geq& \expect{|X(t)| \left|\right. |X(t)|>\epsilon}^{1+\delta} Pr[|X(t)|>\epsilon] \label{eq:app-jensen} 
\end{eqnarray}
where \eqref{eq:app-jensen} follows by Jensen's inequality applied to the conditional expectation
of the function $f(|X(t)|)$, where $f(x) = x^{1+\delta}$ is convex over $x \geq 0$.  Multiplying inequality 
\eqref{eq:app-jensen} by $Pr[|X(t)|>\epsilon]^{\delta}$ yields: 
\[ C Pr[|X(t)|>\epsilon]^{\delta} \geq (\expect{|X(t)| \left|\right. |X(t)|>\epsilon}Pr[|X(t)|>\epsilon])^{1+\delta} \geq 0 \]
Taking a limit of the above as $t\rightarrow\infty$ and using \eqref{eq:in-prob} yields: 
\[  0 \geq \lim_{t\rightarrow\infty} (\expect{|X(t)| \left|\right. |X(t)|>\epsilon}Pr[|X(t)|>\epsilon])^{1+\delta} \geq 0 \]
It follows that: 
\[ \lim_{t\rightarrow\infty} \expect{|X(t)|  \left|\right.  |X(t)|>\epsilon}Pr[|X(t)|>\epsilon] = 0 \]
Using this equality and taking a $\limsup$ of \eqref{eq:bound1} yields 
$\limsup_{t\rightarrow\infty} \expect{X(t)} \leq \epsilon$. 
This holds for all $\epsilon >0$, and so $\limsup_{t\rightarrow\infty} \expect{X(t)} \leq 0$. 
Similarly, it can be shown that $\liminf_{t\rightarrow\infty} \expect{X(t)} \geq 0$. 
Thus, $\lim_{t\rightarrow\infty} \expect{X(t)} = 0$. 
\end{proof} 

Recall that convergence with probability 1 is stronger than convergence in probability, and so 
the above result also holds if $\lim_{t\rightarrow\infty} X(t) = x$ with probability 1.  Theorem \ref{thm:bmc} 
can be applied to the case when $\lim_{K\rightarrow\infty} \frac{1}{K}\sum_{k=0}^{K-1} e[k] = \overline{e}$
with probability 1, for some finite constant $\overline{e}$, and when there is a constant $C$ such that 
$\expect{e[k]^2} \leq C$ for all $k$.  Indeed, one can define $X(K) = \frac{1}{K}\sum_{k=0}^{K-1} e[k]$ for
$K\in\{1, 2, 3, \ldots\}$
and use the Cauchy-Schwartz inequality to show
$\expect{X(K)^2} \leq C$ for all $K \in \{1, 2, 3, \ldots\}$, and so 
$\lim_{K\rightarrow\infty} \frac{1}{K}\sum_{k=0}^{K-1}\expect{e[k]} = \overline{e}$. 

\section*{Appendix B --- The Decision Rule for Linear Fractional Programming} 

This appendix proves optimality of the decision rule for online 
linear fractional programming given in Exercise \ref{ex:linear-fractional}. 
Specifically, suppose we have real numbers $\phi_0, \phi_1, \ldots, \phi_M$,
a positive real number $b_0>0$, and non-negative real numbers $b_1, \ldots, b_M$. 
The goal is to solve the following problem: 
\begin{eqnarray*}
\mbox{Minimize:} & \frac{\phi_0 + \sum_{i=1}^M\phi_ix_i}{b_0 + \sum_{i=1}^Mb_ix_i} \\
\mbox{Subject to:} & 0 \leq x_i \leq 1 \: \: \forall i \in \{1, \ldots, M\}
\end{eqnarray*}
%Let $val^*$ denote the minimum value of the objective function 
%for the problem \eqref{eq:appb-p0}-\eqref{eq:appb-p1}.  

To this end, define disjoint 
sets $\script{I}$ and $\script{J}$ that partition the indices $\{1, \ldots, M\}$ as follows: 
\[ \script{I} \defequiv \{i \in \{1, \ldots, M\} | b_i = 0\} \: \: , \: \: \script{J} \defequiv \{j \in \{1, \ldots, M\} | b_j > 0\} \]
Then the objective function can be written: 
\[ \frac{\phi_0 + \sum_{i=1}^M\phi_ix_i}{b_0  + \sum_{i=1}^Mb_ix_i}  = \frac{\phi_0 + \sum_{i\in\script{I}} \phi_ix_i}{b_0 + \sum_{j\in\script{J}} b_jx_j}
+ \frac{\sum_{j\in\script{J}} \phi_jx_j}{b_0 + \sum_{j\in\script{J}} b_jx_j}\]
The denominator is always positive, and so regardless of the $x_j$ values for $j \in \script{J}$, the first term on the right-hand-side
above 
is minimized by choosing $x_i$ for all $i \in \script{I}$ such that $x_i = 0$ if $\phi_i \geq 0$, and $x_i = 1$ if $\phi_i < 0$. 
Let $x_i^*$ for $i \in \script{I}$ denote these decisions. 
It remains only to choose optimal $x_j$ values for $j \in \script{J}$.   Define $\gamma_0 \defequiv \phi_0 + \sum_{i\in\script{I}} \phi_ix_i^*$. 
We want to solve the following: 
\begin{eqnarray}
\mbox{Minimize:} & \frac{\gamma_0 + \sum_{j\in\script{J}} \phi_jx_j}{b_0 + \sum_{j \in \script{J}} b_jx_j} \label{eq:appb-p0} \\
\mbox{Subject to:} & 0 \leq x_j \leq 1 \: \: \forall j \in \script{J} \label{eq:appb-p1} 
\end{eqnarray}
Define $val^*$ as the minimum value for the objective function in the problem \eqref{eq:appb-p0}-\eqref{eq:appb-p1}. 

\begin{lem} \label{lem:rank-order}
The following values $(x_j^*)_{j\in\script{J}}$ solve the problem \eqref{eq:appb-p0}-\eqref{eq:appb-p1}: 
For each $j \in \script{J}$, we have: 
\begin{equation} \label{eq:xjstar} 
 x_j^* = \left\{ \begin{array}{ll}
                          0  &\mbox{ if $\phi_j/b_j \geq val^*$} \\
                             1  & \mbox{ if $\phi_j/b_j < val^*$} 
                            \end{array}
                                 \right.
                                 \end{equation} 
\end{lem}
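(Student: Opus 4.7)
The plan is to reformulate the inequality $v(x) \geq val^*$ (where $v(x)$ denotes the objective and $val^*$ is the optimum) as a linear inequality, and then to show that the rule in the statement exactly minimizes that linear expression. Since $b_0 + \sum_{j\in\script{J}} b_j x_j > 0$ on the feasible set, the inequality $v(x) \geq val^*$ is equivalent to
\[
f(x) \;\defequiv\; (\gamma_0 - val^* b_0) + \sum_{j\in\script{J}} (\phi_j - val^* b_j)\, x_j \;\geq\; 0.
\]
By definition of $val^*$, this holds for all feasible $x \in [0,1]^{|\script{J}|}$, and there exists an optimizer $\tilde{x}$ satisfying $f(\tilde{x}) = 0$.

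The next step is to minimize the linear function $f$ over the box $[0,1]^{|\script{J}|}$. Because $f$ is separable and linear in each coordinate $x_j$, its minimum is attained by setting $x_j = 0$ whenever the coefficient $r_j \defequiv \phi_j - val^* b_j$ is positive, $x_j = 1$ whenever $r_j$ is negative, and any value (e.g., $0$) when $r_j = 0$. Since $b_j > 0$ for all $j \in \script{J}$, the sign of $r_j$ matches the sign of $\phi_j/b_j - val^*$, so this coordinate-wise rule is exactly the rule \eqref{eq:xjstar} stated in the lemma. Let $(x_j^*)_{j\in\script{J}}$ denote the resulting point.

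Finally, I combine the two facts. The existence of $\tilde{x}$ with $f(\tilde{x}) = 0$ together with $f \geq 0$ on the feasible set forces $\min f = 0$. Since $(x_j^*)$ attains this minimum by the previous paragraph, we get $f(x^*) = 0$, which translates back to $v(x^*) = val^*$, proving optimality of the rule. The only subtlety is the tie case $\phi_j/b_j = val^*$: there $r_j = 0$, so the value of $x_j$ does not affect $f$, and the choice $x_j^* = 0$ made by the lemma is harmless. The main conceptual step (not a difficult one, but the crux) is the reformulation that turns the fractional comparison $v(x) \geq val^*$ into the linear inequality $f(x) \geq 0$; once that pivot is made, separable linear minimization on a box finishes the argument without further work.
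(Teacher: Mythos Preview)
Your proof is correct and takes a genuinely different route from the paper's. The paper starts from an arbitrary optimal solution $(x_j)_{j\in\script{J}}$ and shows, via a direct algebraic manipulation of the ratio (equation \eqref{eq:appb-last}), that swapping any single component $x_k$ to the value $x_k^*$ prescribed by the rule cannot increase the ratio; iterating this swap over all components converts the given optimum into $(x_j^*)$ while preserving optimality. You instead perform a Dinkelbach-style linearization: using positivity of the denominator, the inequality $v(x)\geq val^*$ is equivalent to the linear inequality $f(x)\geq 0$, and since an optimizer $\tilde{x}$ exists (compact feasible set, continuous objective) you know $\min f = 0$; then separable linear minimization over the box immediately identifies the rule \eqref{eq:xjstar} as a minimizer of $f$, hence $f(x^*)=0$ and $v(x^*)=val^*$. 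Your argument is shorter and highlights the connection to the standard parametric reformulation of linear fractional programs, while the paper's swap argument is more self-contained (no appeal to compactness/continuity is needed beyond the assumed existence of an optimum) and makes the componentwise structure explicit, which feeds naturally into the greedy rank-ordering procedure described immediately after the lemma.
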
 
\begin{proof} 
Let $(x_j)_{j\in\script{J}}$ be an optimal solution to \eqref{eq:appb-p0}-\eqref{eq:appb-p1}, so that $0 \leq x_j \leq 1$
for all $j \in \script{J}$ and: 
\[ val^* = \frac{\gamma_0 + \sum_{j\in\script{J}} \phi_jx_j}{b_0 + \sum_{j \in \script{J}} b_jx_j}  \]
Define $c$ and $d$ by: 
\[ c \defequiv \gamma_0 + \sum_{j\in\script{J}} \phi_jx_j \: \: , \: \: d \defequiv b_0 + \sum_{j\in\script{J}} b_jx_j \]
so that $val^* = c/d$.  
Now take any $k \in \script{J}$.  It suffices so show we can change $x_k$ to $x_k^*$, as defined in 
\eqref{eq:xjstar}, without changing the ratio from its value $val^*$.  Indeed, if this is true, then 
we can sequentially change each component
$x_j$ to $x_j^*$ without changing the ratio, and so $(x_j^*)_{j\in\script{J}}$ is also 
an optimal solution. 

To this end, define $val$ as the corresponding ratio with $x_k$ replaced with $x_k^*$.   We show that $val=val^*$. 
Clearly $val \geq val^*$ by definition of $val^*$ as the minimum ratio.  To show the opposite inequality, 
define $\delta_k^* = x_k^* - x_k$, and 
note that: 
\begin{eqnarray}
val &=& \frac{c + \phi_k\delta_k^*}{d + b_k\delta_k^*} \nonumber \\
&=& \frac{c}{d} +  \frac{c + \phi_k\delta_k^*}{d + b_k\delta_k^*} - \frac{c}{d} \nonumber \\
&=& \frac{c}{d} + \frac{cd + \phi_k\delta_k^*d - cd - cb_k\delta_k^*}{d(d+b_k\delta_k^*)} \nonumber \\
&=& \frac{c}{d} + \delta_k^*[\phi_k/b_k - c/d]\frac{b_k}{d+b_k\delta_k^*} \nonumber \\
&=& val^* + \delta_k^*[\phi_k/b_k - val^*]\frac{b_k}{d+b_k\delta_k^*} \label{eq:appb-last} 
\end{eqnarray}
Note that $b_k/(d+b_k\delta_k^*)\geq0$, and hence the second term on the right-hand-side of
\eqref{eq:appb-last} is non-positive if $\delta_k^*[\phi_k/b_k - val^*] \leq 0$.
In the case $\phi_k/b_k \geq val^*$, then by \eqref{eq:xjstar} we have $x_k^* = 0$. Thus, $\delta_k^* \leq 0$ and
$\delta_k^*[\phi_k/b_k-val^*] \leq 0$.  Thus, the second term in the right-hand-side of \eqref{eq:appb-last} is non-positive and 
so  $val \leq val^*$.  In the opposite case $\phi_k/b_k < val^*$, then $x_k^* = 1$ and so $\delta_k^*\geq0$, 
$\delta_k^*[\phi_k/b_k-val^*] \leq 0$. Thus, the second term on the right-hand-side of \eqref{eq:appb-last}  is 
again non-positive, so that $val \leq val^*$. 
\end{proof} 

Now rank order the indices $j \in \script{J}$ from smallest to largest value of 
$\phi_j/b_j$, breaking ties arbitrarily. The solution of Lemma \ref{lem:rank-order}
has the form where $x_j^*=1$ for the first $n$ indices in this rank order, and $x_j^*=0$ 
for the remaining indices, 
for some value $n \in \{0, 1, \ldots, |\script{J}|\}$ (where $|\script{J}|$ is the size of set $\script{J}$). 
Define $val_n$ as the value of the ratio when $x_j=1$ for the first $n$ indices in the rank order, and $x_j=0$
for the remaining indices in $\script{J}$.  Start with $val_0=\gamma_0/b_0$. 
According to the rank ordering of $\script{J}$, successively change $x_j$ from $0$ to $1$ if it strictly decreases
the ratio, computing $val_1$, $val_2$, and so on, until we reach the first index that does  
\emph{not} improve the ratio.  By
an argument similar to \eqref{eq:appb-last}, it is easy to see 
that this can only occur at an index $j$ such that $\phi_j/b_j$ is greater
than or equal to the current ratio, which means it is also greater than or equal to $val^*$ (since by definition,  
$val^*$ is less than or equal to any achievable ratio). Thus, all indices $i\in\script{J}$ that come after $j$ in the rank ordering 
also have $\phi_i/b_i \geq val^*$, and this greedy approach has thus arrived at the solution \eqref{eq:xjstar}. 

% ------------------------------------------------------------------------
%GATHER{Xbib.bib}   % For Gather Purpose Only
%GATHER{Paper.bbl}  % For Gather Purpose Only
\bibliographystyle{unsrt}
\bibliography{../../../../latex-mit/bibliography/refs}
\end{document}